\documentclass[11pt]{amsart}

\usepackage{amssymb,comment}

\usepackage{subfigure, graphicx, tikz, color}
\usepackage{hyperref}     
\usepackage{graphicx} 
\usepackage[font=small,labelfont=bf]{caption}

\newcommand{\E}{\,{\mathbb{E}}}
\newcommand{\Prob}{{\mathbb{P}}}
\newcommand{\R}{\mathbb{R}}

\newcommand{\N}{{\mathbb N}}

\providecommand{\P}{}
\renewcommand{\P}[1]{{\mathbb P}\left\{#1\right\}}

\newcommand{\rnc}[1]{\textcolor{red}{#1}}
\newcommand{\flc}[1]{\textcolor{blue}{#1}}

\newtheorem{thm}{Theorem}
\newtheorem{lem}[thm]{Lemma}

\newtheorem{cor}[thm]{Corollary}
\newtheorem{dfn}[thm]{Definition}
\newtheorem{rem}[thm]{Remark}

\theoremstyle{remark}

\DeclareMathOperator{\dist}{dist} 
\DeclareMathOperator{\Var}{Var}
\DeclareMathOperator{\Cov}{Cov}

\usepackage[normalem]{ulem}

\definecolor{clou}{rgb}{0.8,0.25,0.5125}

\usepackage[numbers,sort&compress] {natbib}

\begin{document}


\title{On topological indices of random   caterpillar graphs}

\author[Lesny]{Florian Lesny}
\address{Institute of Mathematics, Goethe University Frankfurt, Frankfurt a.M., Germany}
\email{lesny@math.uni-frankfurt.de}

\author[Neininger]{Ralph Neininger}
\address{Institute of Mathematics, Goethe University Frankfurt, Frankfurt a.M., Germany}
\email{neininger@math.uni-frankfurt.de}

\author[Schmieder]{Markus Schmieder}
\address{Institute of Mathematics, Goethe University Frankfurt, Frankfurt a.M., Germany}
\email{s4238590@stud.uni-frankfurt.de}

\begin{abstract}
We consider the asymptotic behaviour of topological indices of random caterpillar graphs, such as the Zagreb, $\alpha$-Randić, Wiener, Gini and level indices, as the size of the graph grows. Our simple, general framework, which is based on a multivariate central limit theorem, the continuous mapping theorem and Slutsky's theorem, can also be used to cover other topological indices. We derive limit laws for these indices and provide exact and asymptotic results on their means and variances. The $\alpha$-Randić index exhibits a discontinuity in its limit distribution at $\alpha=\frac{1}{2}$.
\end{abstract}

\subjclass[2010]{60F05, 05C07, 05C12, 05C80, 60C05.}
\date{\today}

\maketitle



\section{Introduction and results}\label{introduction}
\noindent
Caterpillar graphs, first introduced by \cite{harary73}, are acyclic graphs used in mathematical chemistry to model molecular structures. They are characterised by a chain of nodes called the 'spine', which remains unchanged during the growth process, while new nodes called 'children' are attached to the spine nodes. We are particularly interested in the behaviour of random caterpillar graphs, in which the new nodes are connected to a parent spine node chosen uniformly at random. To analyse these random graph structures, we will discuss several topological indices, such as the Zagreb index \cite{gutman72}, the Randić index \cite{randic75}, the Wiener index \cite{wiener47} and the Gini index \cite{ceve12}. General references on topological indices of (random) graph models can be found in the monographs and collections \cite{tri92,gu86,de99}. For more detailed probabilistic analyses of such indices, including the derivation of limit distributions or tail bounds, see \cite{ne02,ja03,fu15,ma17,akne07}.

Expansions of the mean and variance for the aforementioned indices and some others have been developed for random caterpillars by \cite{zhang2021}. In this note, we derive limit theorems for these topological indices. 

This work builds upon the third author's BSc thesis and is organised as follows: First, we provide a brief introduction to the model and present our results. We also address an erroneous limit distribution reported in previous literature. Section \ref{mclt} contains the methods underlying our proofs. The proofs are presented in Section \ref{proofs}, while some technical calculations are collected in the Appendix. 

\subsection*{Random caterpillar}
A \emph{caterpillar} is a graph process that, initially (at time $0$) consists of $m$ nodes, which are arranged as a chain. We call them \emph{spine nodes}. Then, the graph grows in discrete time, at each time step by one node, which is attached to one of the $m$ spine nodes. We  call this new node a \emph{child} of the spine node to which it is connected. Note that at time $n$ (i.e., after $n$ steps) the graph has $n+m$ nodes and $(n+m-1)$ edges. A \emph{random caterpillar} is a caterpillar where, in each step, the spine node that gets connected to the new node is chosen uniformly at random from the $m$ spine nodes and independently from the previous choices. 

By 
$$I^{(n)}:=\left(I_1^{(n)},\dots,I_m^{(n)}\right)$$
we denote the numbers of children of the spine nodes at time $n$. We denote by $D^{(n)}_j$ the degree of the $j$th spine node at time $n$. Note, that the  $I^{(n)}$ determine these degrees: The first and last spine node have one adjacent spine node while all other spine nodes have two neighbors. This implies $D_1^{(n)}=I_1^{(n)}+1$, $D_m^{(n)}=I_m^{(n)}+1$ and $D_j^{(n)}=I_j^{(n)}+2$ for $j \in \{2,\dots,m-1\}$.\\
Note that $I^{(n)}$ is multinomially distibuted with $n$ trials and all event probabilities being $\frac{1}{m}$, which we denote by $I^{(n)}\overset{d}{=}M(n;\frac{1}{m},\dots,\frac{1}{m})$. 
\subsection*{Topological indices}
We are interested in topological indices of random caterpillar graphs. A topological index of a graph is a graph-invariant quantity that often provides information about the underlying graph's structure. All graphs are assumed to be undirected. We use the notation $d(v)$ for the degree of node $v$ and $\dist(u,v)$ as the graph distance (number of edges on the shortest path) between the nodes $u$ and $v$. The structure of the caterpillar implies $\dist(u,v) \le m+1$ for all nodes $u,v$. If the graph is rooted, we write $D(v)$ for the distance between the node $v$ and the root, also called the depth of node $v$.

\begin{dfn}\label{indices}
    Let $H=(V,E)$ be a graph with node set $V$ and edge set $E$.\\
    The Zagreb index is defined as
    \begin{align}\label{Zagreb}
        Z_H:=\sum_{v \in V} (d(v))^2.
    \end{align}
For $\alpha \in \R$, the  $\alpha$-Randić index of $H$ is given by
    \begin{align}\label{Randic}
        R_H^{[\alpha]}:=\sum_{\{u,v\} \in E} (d(u)d(v))^\alpha.
    \end{align}
If $H$ is connected, the Wiener index is defined as  
    \begin{align}\label{Wiener}
        W_H:=\frac{1}{2}\sum_{u,v\in V} \dist(u,v).
    \end{align}
If $H$ is connected and rooted, the level index is defined as 
    \begin{align}\label{level}
        L_{H}:=\frac{1}{2}\sum_{u,v \in V} \vert D(u)-D(v)\vert.
    \end{align}
For a random graph $H$ of a class of rooted, connected graphs $\mathcal{H}$, the Gini index is given by
\begin{align}\label{Gini}
G_H:=\frac{L_{H}}{\E[V]^2\E[D_\mathcal{H}^\star]},
\end{align}
where $D_\mathcal{H}^\star$ is the depth of a randomly chosen node in a random tree of class $\mathcal{H}$ (see \cite{ma17} for more information).
\end{dfn}
\noindent
Common choices for the parameter of the Randić index are  $\alpha=1$ or $\alpha=-1/2$; see \cite{randic75,zhang2021} for more details. For other versions of the Gini index, see \cite{ma17,zhde19}.
\subsection*{Notation}
By  $\stackrel{d}{\longrightarrow}$ and $\stackrel{\Prob}{\longrightarrow}$ we denote convergence in distribution and in probability, respectively. We use the Bachmann--Landau big $\mathrm{O}$ and little $o$ notation, where $o_\Prob(1)$ denotes a term which go to zero in probability. By $\mathcal{N}(0,\sigma^2)$, we denote the centered normal distribution with variance $\sigma^2$, and $a^t$ denotes the transposed vector (a column vector) of $a\in\R^m$.

\subsection*{Results}
We are now ready to present our results, which include limit theorems for the five topological indices defined above in a random caterpillar. Throughout the remainder of this note, $Z_n$, $W_n$, $R_n^{[\alpha]}$, $L_n$ and $G_n$ will always refer to the topological index of a random caterpillar at time $n$, with the length of the spine, $m$, suppressed from the notation. For $L_n$ and $G_n$, the root is one end of the spine.\\
The first topological index that we will consider is the Wiener index, $W_n$. Its expectation, for  $n,m \in \N$, was derived in \cite{zhang2021}:
 \begin{align*}
        \E[W_n] = \frac{n^2(m^2+6m-1)+n(m-1)(2m^2+7m-1)}{6m}+\frac{m(m^2-1)}{6}.
    \end{align*}
Note that for $m=1$ we have $W_n=n^2$. We calculate the variance of $W_n$ and prove the central limit theorem. 
\begin{thm}\label{clt:Wiener}
    For all $n \in \N$ and $m \ge 2$ we have
    \begin{align}\label{Var_W_n}
        \Var(W_n) &= \frac{n^3(m-1)(m-2)(m+1)(m+2)}{180m^2}\\
        &\quad\quad~+\frac{n^2(m-1)(m+1)(m^3+m^2-4m+11)}{90m^2}\nonumber\\
        &\quad\quad~+\frac{n(m-1)(m+1)(m^4-2m^3-7m^2+8m-18)}{180m^2}.\nonumber
   \end{align}
    
     For $m=2$ as $n\to\infty$, we have
    \begin{equation*}
        \frac{W_n-\E[W_n]}{n} \overset{d}{\longrightarrow} 3N_1N_2+N_1^2+N_2^2+\frac{1}{4}
    \end{equation*}
    with $(N_1,N_2)$ defined in \eqref{N_i}.
    
    For $m \ge 3$ it holds
    \begin{equation*}
            \frac{W_n-\mathbb{E}[W_n]}{n^{3/2}} \overset{d}{\longrightarrow}
            \mathcal{N}\left(0,\frac{(m-1)(m-2)(m+1)(m+2)}{180m^2}\right).
    \end{equation*}
\end{thm}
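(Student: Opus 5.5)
The plan is to write $W_n$ as an explicit quadratic polynomial in the multinomial vector $I^{(n)}$. The variance formula and both limit laws then follow from moments of the multinomial distribution, the multivariate central limit theorem, and the continuous mapping and Slutsky theorems.

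\emph{Step 1: $W_n$ as a quadratic form.} In a caterpillar the distances are determined by spine positions:
\begin{itemize}
\item two spine nodes $i,j$ are at distance $|i-j|$;
\item a child of spine node $i$ and spine node $j$ are at distance $|i-j|+1$;
\item children of distinct spine nodes $i\neq j$ are at distance $|i-j|+2$;
\item two children of the same spine node are at distance $2$.
\end{itemize}
Summing these contributions gives
\begin{align*}
W_n=c_m+\sum_{i=1}^m a_i I_i^{(n)}+\frac12\sum_{i\neq j}\bigl(|i-j|+2\bigr)I_i^{(n)}I_j^{(n)}+\sum_{i=1}^m I_i^{(n)}\bigl(I_i^{(n)}-1\bigr).
\end{align*}
Here $c_m=\frac{m(m^2-1)}{6}$ and $a_i=\sum_{j}(|i-j|+1)$. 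Since $\sum_i I_i^{(n)}=n$ deterministically, the constant $2$ in the cross terms and the diagonal squares combine into $(\sum_i I_i^{(n)})^2=n^2$. This leaves
\begin{align*}
W_n=\tilde c_{n,m}+\sum_i \tilde a_i I_i^{(n)}+\frac12\sum_{i,j}|i-j|\,I_i^{(n)}I_j^{(n)}
\end{align*}
with deterministic $\tilde c_{n,m}$ and modified linear coefficients $\tilde a_i$.

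\emph{Step 2: the variance.} The exact variance \eqref{Var_W_n} follows by computing $\Var$ of this quadratic polynomial. This only needs the mixed multinomial moments of $I^{(n)}$ up to order four, which are available from the factorial moment formula $\E[\prod_i (I_i)_{k_i}]=(n)_{\sum k_i}m^{-\sum k_i}$. The resulting double and triple sums over $|i-j|$ are evaluated with standard power-sum identities. I expect this bookkeeping to be the main, purely computational obstacle, and I would relegate it to the Appendix and check it against small $m$.

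\emph{Step 3: limit laws.} Write $I_i^{(n)}=\frac nm+\sqrt n X_i^{(n)}$. By the multivariate central limit theorem, $X^{(n)}\overset{d}{\longrightarrow}X\sim\mathcal N(0,\Sigma)$ with $\Sigma=\frac1m(\mathrm{Id}-\frac1m J)$, where $J$ is the all-ones matrix. Also $\sum_i X_i^{(n)}=0$. Inserting this gives
\begin{align*}
W_n-\E[W_n]=n^{3/2}\sum_i v_iX_i^{(n)}+\frac n2\Bigl(\sum_{i,j}|i-j|X_i^{(n)}X_j^{(n)}-\E[\cdot]\Bigr)+\sqrt n\sum_i\tilde a_iX_i^{(n)},
\end{align*}
with $v_i=\frac1m\sum_j|i-j|$.

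For $m\ge3$ the vector $v$ is not constant, so by Slutsky's theorem $n^{-3/2}(W_n-\E[W_n])\overset{d}{\longrightarrow} v^tX$. This limit is centered normal with variance $v^t\Sigma v=\frac1m\sum_i(v_i-\bar v)^2$. This value should agree with the $n^3$ coefficient in \eqref{Var_W_n}, and checking that agreement serves as a consistency test.

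For $m=2$ we have $v_1=v_2=\frac12$, so $v^tX^{(n)}=0$ and the leading term vanishes. The quadratic term then dominates at order $n$: it equals $nX_1^{(n)}X_2^{(n)}$ minus its mean, while the linear term is $O_\Prob(\sqrt n)$ and vanishes after dividing by $n$. By the continuous mapping theorem, $(W_n-\E[W_n])/n$ converges in distribution to a quadratic polynomial in the Gaussian limit. The final step is to express this limit in terms of the variables $(N_1,N_2)$ from \eqref{N_i}, using $X_2=-X_1$, and match the centering constant $\frac14$ via $\E[X_1^{(n)}X_2^{(n)}]=-\frac1{4}$.
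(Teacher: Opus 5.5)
Your proposal is correct and is essentially the paper's proof. You write $W_n$ as a quadratic polynomial in $I^{(n)}$ and obtain the exact variance from multinomial moments up to order four; the paper does this bookkeeping in its appendix, and you still owe the closed-form evaluations, including $\frac1m\sum_i(v_i-\bar v)^2=\frac{(m-1)(m-2)(m+1)(m+2)}{180m^2}$. The limits then follow from the multivariate CLT with continuous mapping and Slutsky, the $n^{3/2}$ term cancelling for $m=2$.

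Your reduction via $\sum_i I_i^{(n)}=n$ to $\frac12\sum_{i,j}|i-j|I_i^{(n)}I_j^{(n)}$ is a harmless simplification. It gives the $m=2$ limit as $N_1N_2+\frac14$, which equals the stated $3N_1N_2+N_1^2+N_2^2+\frac14$ almost surely because $N_2=-N_1$.
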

Analogous results for the level index $L_n$ and the Gini index $G_n$ are as follows. In \cite{ma17}, explicit formulae for the expectations were derived; see also \cite{zhde19} for other types of Gini indices and experimental results. For $m \in \N$ and $n \ge 0$, it was shown in \cite{ma17} that
\begin{align*}
    \E[L_n]=\frac{1}{6m}\left(\left(m^2-1\right)n^2+\left(2m^3-m^2+4m+1\right)n+m^4-m^2\right)
\end{align*}
and
\begin{align*}
    \E[G_n]=\frac{\left(m^2-1\right)n^2+\left(2m^3-m^2+4m+1\right)n+m^4-m^2}{6m(n+m)^2\left(n/(n+m)+(m-1)/2\right)}.
\end{align*}
To calculate the expectation of the Gini index, note that the number of nodes in the random caterpillar at time $n$ is $n+m$, and the average depth of a randomly selected node is given by $n/(n+m)+(m-1)/2$. For $m=1$, we have trivially $L_n=n$ and $G_n=1/(n+1)$. Furthermore, we have limit theorems for the level and the Gini index.
\begin{thm}\label{clt:Gini} 
For $m=2$, as $n \to \infty$, we have
\begin{align*}
\frac{L_n-\E[L_n]}{n} \overset{d}{\longrightarrow} N_1N_2+\frac{1}{4}
\end{align*}
and
\begin{align*}
    n(G_n-\E[G_n]) \overset{d}{\longrightarrow} N_1N_2+\frac{1}{4}
\end{align*}
with $(N_1,N_2)$ defined in \eqref{N_i}.\\
For $m \ge 3$, as $n \to \infty$, we have
\begin{align*}
    \frac{L_n-\E[L_n]}{n^{3/2}} \overset{d}{\longrightarrow} \mathcal{N}\left(0,\frac{(m-1)(m-2)(m+1)(m+2)}{180m^2}\right)
\end{align*}
and
\begin{align*}
    \sqrt{n}\left(G_n-\E[G_n]\right) \overset{d}{\longrightarrow} \mathcal{N}\left(0,\frac{(m-2)(m-1)(m+2)}{45(m+1)m^2}\right).
\end{align*}
\end{thm}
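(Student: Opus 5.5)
The plan is to write $L_n$ as an explicit quadratic polynomial in the multinomial vector $I^{(n)}$. Then we apply the multivariate central limit theorem to $I^{(n)}$, use the continuous mapping theorem to handle the polynomial, and use Slutsky's theorem for negligible terms and for passing from $L_n$ to $G_n$.

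\textbf{Step 1: $L_n$ as a quadratic form in $I^{(n)}$.} Root the caterpillar at spine node $1$. Spine node $j$ has depth $j-1$ and its children have depth $j$. The numbers of nodes at each depth are therefore
\[
c_0=1,\qquad c_k=1+I_k^{(n)} \quad (1\le k\le m-1),\qquad c_m=I_m^{(n)}.
\]
Grouping the pairs $\{u,v\}$ in the definition of $L_n$ by the depths of $u$ and $v$ gives
\[
L_n=\sum_{0\le k<l\le m}(l-k)\,c_kc_l .
\]

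\textbf{Step 2: expansion around the mean.} Set $I_j^{(n)}=n/m+\sqrt{n}\,X_j^{(n)}$. Then $\sum_j X_j^{(n)}=0$, and by the multivariate CLT $X^{(n)}\overset{d}{\longrightarrow}X\sim\mathcal N(0,\Sigma)$ with $\Sigma=\frac1m I-\frac1{m^2}\mathbf 1\mathbf 1^t$. Expanding $L_n$ gives
\[
L_n=(\text{deterministic})+n^{3/2}\sum_{j=1}^m a_jX_j^{(n)}+n\sum_{1\le k<l\le m}(l-k)X_k^{(n)}X_l^{(n)}+O(\sqrt n)\cdot(\text{linear in }X^{(n)}),
\]
where
\[
a_j=\frac1m\Big(\frac{j(j-1)}2+\frac{(m-j)(m-j+1)}2\Big).
\]
The constant rows $c_0=1$ and the $+1$'s only enter the lower-order terms.

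\textbf{Case $m\ge3$.} The vector $(a_j)$ is a nonconstant quadratic in $j$, so $a^tX$ is nondegenerate. Since $\sum_j X_j=0$, the limit is centered normal with variance
\[
a^t\Sigma a=\frac1m\sum_j (a_j-\bar a)^2 .
\]
I expect this to evaluate to $\frac{(m-1)(m-2)(m+1)(m+2)}{180m^2}$, which is a routine sum of polynomials in $j$. Dividing by $n^{3/2}$, the remaining terms are $o_\Prob(1)$, and subtracting $\E[L_n]$ only removes the deterministic part up to $O(n)$.

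\textbf{Case $m=2$.} Here $a_1=a_2=\tfrac12$, and the linear term is $\tfrac12\sqrt n\,(X_1+X_2)\,n=0$, so it vanishes identically. The $n$-order term is $X_1^{(n)}X_2^{(n)}$. By the continuous mapping theorem it converges to $N_1N_2$, with $(N_1,N_2)$ as in \eqref{N_i}, the limit of $X^{(n)}$. Centering contributes $-\E[X_1X_2]=\tfrac14$, since $\Cov(I_1,I_2)=-n/4$. The remaining $O(\sqrt n)$ terms are negligible after division by $n$.

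\textbf{Step 3: the Gini index.} The denominator of $G_n$ is deterministic: $d_n=(n+m)^2\big(n/(n+m)+(m-1)/2\big)$. Hence $G_n-\E[G_n]=(L_n-\E[L_n])/d_n$ exactly. For $m\ge3$ we have $n^{2}/d_n\to 2/(m+1)$, so
\[
\sqrt n\,(G_n-\E[G_n])=\frac{L_n-\E[L_n]}{n^{3/2}}\cdot\frac{n^2}{d_n}.
\]
By Slutsky's theorem the limiting variance is $\frac{4}{(m+1)^2}$ times that of $L_n$, which equals $\frac{(m-2)(m-1)(m+2)}{45(m+1)m^2}$ as claimed. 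For $m=2$ the same computation gives $n(G_n-\E[G_n])=\frac{L_n-\E[L_n]}{n}\cdot\frac{n^2}{d_n}$, with $n^2/d_n\to 2/3$. The limiting constant has to be reconciled with the normalisation of $(N_1,N_2)$ in \eqref{N_i}, and I would double-check this factor carefully.

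The main obstacle is purely computational: the explicit evaluation of $a^t\Sigma a$, and verifying that no other contribution of order $n^{3/2}$ (respectively $n$ for $m=2$) arises from the boundary levels $c_0$ and $c_m$.
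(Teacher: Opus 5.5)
Your proposal follows the paper's proof essentially step for step, and it is correct. It uses the same level-size representation $L_n=\sum_{0\le k<l\le m}(l-k)c_kc_l$ with $c_0=1$, $c_k=I_k^{(n)}+1$, $c_m=I_m^{(n)}$, and the same substitution $I_j^{(n)}=n/m+\sqrt n\,I_j^{(n),\ast}$ with the cancellation $\sum_j I_j^{(n),\ast}=0$. It then applies continuous mapping and Slutsky, and uses the fact that $G_n=L_n/d_n$ with deterministic $d_n$.

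Your formula $a^t\Sigma a=\frac1m\sum_j(a_j-\bar a)^2$ is a slightly cleaner route to the variance than the paper's $c_k$-computation, and it gives the claimed value. With $t_j=j-\frac{m+1}{2}$ one has $a_j=t_j^2/m+\frac{m^2-1}{4m}$ and $\frac1m\sum_j\bigl(t_j^2-\overline{t^2}\bigr)^2=\frac{(m^2-1)(m^2-4)}{180}$. Hence $a^t\Sigma a=\frac{(m^2-1)(m^2-4)}{180m^2}$. This proves the $L_n$ statements for $m=2$ and $m\ge3$, and the Gini statement for $m\ge 3$.

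The one place where you hedge, the $m=2$ Gini constant, is not a gap in your argument. It is an error in the statement, and you should trust your computation. For $m=2$ we have $d_n=(3n^2+8n+4)/2$, so exactly
\[
n\bigl(G_n-\E[G_n]\bigr)=\frac{2n^2}{3n^2+8n+4}\cdot\frac{L_n-\E[L_n]}{n}\overset{d}{\longrightarrow}\frac23\Bigl(N_1N_2+\frac14\Bigr).
\]
There is nothing to reconcile with the normalisation of $(N_1,N_2)$, because it is fixed by \eqref{N_i}. For $m=2$ that gives $N_2=-N_1$ almost surely with $\Var(N_1)=\frac14$. So $N_1N_2+\frac14=\frac14-N_1^2$ has variance $\frac18$, while $\frac23\bigl(\frac14-N_1^2\bigr)$ has variance $\frac1{18}$: the two candidate limits are different distributions.

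The paper's proof for $m=2$ simply asserts $n(G_n-\E[G_n])\to N_1N_2+\frac14$. It drops the factor $n^2/d_n\to 2/(m+1)$, which it does carry for $m\ge3$ (that is where its $\frac{2}{m(m+1)}$ comes from). So the $m=2$ Gini claim as written cannot be proved. Your argument proves the corrected limit $\frac23\bigl(N_1N_2+\frac14\bigr)$, and you should state that outright rather than leave it open.
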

The expectation and the variance of the Zagreb index $Z_n$ for $m \ge 2$ were derived in \cite{zhang2021}: 
For $n \ge 0$ and for $m \ge 2$, it was shown that
    \begin{align*}
        \E[Z_n]=\frac{n^2}{m}+\frac{(6m-5)n}{m}+4m-6
    \end{align*}
    and
    \begin{align*}
        \Var(Z_n)=\frac{2n((m-1)n+3m-7)}{m^2}.
    \end{align*}
    Note that for  $m=1$, we have $Z_n=n(n+1)$.
\noindent
Furthermore, we are able to compute a 
limit theorem for  $Z_n$, which  is non-normal.
\begin{thm}\label{clt:Zagreb}
For  $m \ge 2$, as $n\to\infty$, we have
\begin{align}\label{clt:Z_n}
    \frac{Z_n-\E[Z_n]}{n} \overset{d}{\longrightarrow}\sum_{i=1}^m N_i^2  -\frac{m-1}{m},
\end{align}
with $(N_1,\ldots,N_m)$ defined in \eqref{N_i}.
\end{thm}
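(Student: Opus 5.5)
Since $(N_1,\dots,N_m)$ in \eqref{N_i} has not been defined at this point, I will assume it is the Gaussian limit of the multinomial vector: $N_i$ centered with $\Var(N_i)=\frac{m-1}{m^2}$ and $\Cov(N_i,N_j)=-\frac{1}{m^2}$, obtained from the multivariate CLT
$$\frac{I^{(n)}-\frac nm\mathbf 1}{\sqrt n}\overset{d}{\longrightarrow}(N_1,\dots,N_m).$$
The approach is to write $Z_n$ exactly as a quadratic polynomial in $I^{(n)}$, centre it, and read off the limit from this CLT using the continuous mapping theorem and Slutsky's theorem.

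\textbf{Step 1: exact formula.} Children are leaves, so each contributes $1$ to $Z_n$, giving a total of $n$. The spine contributes
$$(I_1^{(n)}+1)^2+(I_m^{(n)}+1)^2+\sum_{j=2}^{m-1}(I_j^{(n)}+2)^2.$$
Write $I_j^{(n)}=\frac nm+\sqrt n\,X_j^{(n)}$, where $X^{(n)}$ is the normalized vector above and $\sum_j X_j^{(n)}=0$. Expanding the squares gives
$$Z_n=c_n+\sqrt n\,\ell_n+n\sum_{j=1}^m (X_j^{(n)})^2 .$$
Here $c_n$ is deterministic. The linear term is $\ell_n=\sum_j b_j(\tfrac{2n}{m}X_j^{(n)}+\dots)$ with constant $b_j\in\{2,4\}$. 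Because $\sum_jX_j^{(n)}=0$, the part proportional to $\frac{2n}{m}\sum_j X_j^{(n)}$ vanishes identically. What remains is $\sqrt n\sum_j b_jX_j^{(n)}=O_\Prob(\sqrt n)$.

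\textbf{Step 2: centring and scaling.} Divide by $n$ and subtract $\E[Z_n]/n$. Since $\E[n\sum_j (X_j^{(n)})^2]=\sum_j\Var(I_j^{(n)})=n\frac{m-1}{m}$ and $\E[X_j^{(n)}]=0$, we get
$$\frac{Z_n-\E[Z_n]}{n}=\sum_j (X_j^{(n)})^2-\frac{m-1}{m}+\frac{1}{\sqrt n}\sum_j b_jX_j^{(n)} .$$
The last term is $o_\Prob(1)$ because $X^{(n)}$ is tight. By the continuous mapping theorem, $\sum_j(X_j^{(n)})^2\overset{d}{\longrightarrow}\sum_jN_j^2$, and Slutsky's theorem then gives \eqref{clt:Z_n}.

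\textbf{Main obstacle.} The key point is the cancellation of the order-$n^{3/2}$ linear term, which comes from the constraint $\sum_j I_j^{(n)}=n$. This is exactly what makes the correct scaling $n$ rather than $n^{3/2}$ and the limit non-normal. I expect to check it by carefully tracking the deterministic part against the formula for $\E[Z_n]$ stated above. In fact no exact match is needed: the $\E[Z_n]$ formula is used only through the identity $\E[Z_n]=c_n+n\frac{m-1}{m}$, which follows from linearity of expectation.
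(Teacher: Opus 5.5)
Your proposal is correct and takes essentially the same route as the paper: expand $Z_n$ into $\sum_i (I_i^{(n)})^2$ plus linear terms, substitute $I_i^{(n)}=n/m+\sqrt n\,I_i^{(n),\ast}$, remove the $n^{3/2}$ cross term using $\sum_i I_i^{(n),\ast}=0$, and conclude with the continuous mapping theorem and Slutsky's theorem. One notational slip: the linear coefficient should be $\ell_n=\sum_j\bigl(\tfrac{2n}{m}+b_j\bigr)X_j^{(n)}$, not $\sum_j b_j(\tfrac{2n}{m}X_j^{(n)}+\dots)$; your conclusion $\sqrt n\,\ell_n=\sqrt n\sum_j b_jX_j^{(n)}$ is nevertheless right.
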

\begin{rem}
Since the right-hand side of the limit statement \eqref{clt:Z_n} is a sum of non-negative random variables shifted by a constant, we can conclude that the limiting distribution is not normal. This corrects a statement from \cite[Theorem 1, page 1780]{zhang2021}.
\end{rem}
\noindent
Analysing the $\alpha$-Randić index of a caterpillar is more difficult than analysing the other indices because the exponent  $\alpha$ makes it harder to calculate the moments explicitly. Therefore, we only derive explicit expressions for the variance when $\alpha=1$; the expectation in this case was  already given for all $m\ge 2$ in \cite{zhang2021} as 
\begin{align*}
        \E\left[R_n^{[1]}\right]= \frac{n^2(2m-1)+n(7m^2-10m+1)+4m^2(m-2)}{m^2}.
    \end{align*}
Note that for $m=1$ we have $ R_n^{[1]}=n^2$. We further obtain
\begin{lem}
    For  $m \ge 2$ we have
    \begin{align*}
\Var\left(R_n^{[1]}\right)&= \frac{2n^3(m-2)}{m^4}+\frac{n^2(3m^3+3m^2-30m+10)}{m^4}\\
        &\quad\quad~+\frac{n(17m^3-67m^2+28m-6)}{m^4}.
    \end{align*}
\end{lem}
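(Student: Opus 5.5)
The plan is to write $R_n^{[1]}$ as an explicit polynomial of degree two in the multinomial vector $I^{(n)}$, and then to compute $\E[(R_n^{[1]})^2]$ from mixed factorial moments of the multinomial distribution. The edges of the caterpillar are of two types:
\begin{itemize}
\item the $m-1$ spine edges $\{j,j+1\}$, which contribute $D_j^{(n)}D_{j+1}^{(n)}$;
\item the $n$ child edges, where each child of spine node $j$ has degree $1$ and so contributes $D_j^{(n)}$.
\end{itemize}
Hence
\begin{align*}
R_n^{[1]}=\sum_{j=1}^{m-1} D_j^{(n)}D_{j+1}^{(n)}+\sum_{j=1}^m I_j^{(n)}D_j^{(n)} .
\end{align*}
Inserting $D_1^{(n)}=I_1^{(n)}+1$, $D_m^{(n)}=I_m^{(n)}+1$ and $D_j^{(n)}=I_j^{(n)}+2$ for interior $j$ gives
\begin{align*}
R_n^{[1]}=\sum_{j=1}^{m-1}I_jI_{j+1}+\sum_{j=1}^m I_j^2+\sum_{j=1}^m c_jI_j+c ,
\end{align*}
with explicit integer coefficients $c_j$ (distinguishing boundary and interior nodes) and a constant $c$. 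As a sanity check I would first recover the known formula for $\E[R_n^{[1]}]$ from this representation.

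For the moments, I would use the factorial moment identity
\begin{align*}
\E\Big[\prod_j (I_j)_{a_j}\Big]=\frac{(n)_{a}}{m^{a}},\qquad a=\sum_j a_j ,
\end{align*}
where $(x)_k$ is the falling factorial. Each monomial $I_j^2$, $I_j^3$, $I_j^4$, $I_j^2I_k$, $I_j^2I_kI_l$, $I_jI_kI_lI_r$, and so on, is rewritten in falling factorials (e.g.\ $I_j^2=(I_j)_2+I_j$, $I_j^3=(I_j)_3+3(I_j)_2+I_j$). Rather than computing $\E[R^2]$ and subtracting $\E[R]^2$, which causes heavy $n^4$ cancellations, I would work with covariances directly:
\begin{align*}
\Var(R)=\sum \text{coefficients}\times\Cov(\text{monomial},\text{monomial}).
\end{align*}
The needed multinomial covariances, such as $\Cov(I_j^2,I_k^2)$, $\Cov(I_jI_{j+1},I_k^2)$, $\Cov(I_jI_{j+1},I_kI_{k+1})$ and $\Cov(I_j,\cdot)$, then each depend only on the overlap pattern of the index sets: equal, sharing one index, or disjoint. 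Each of these is a polynomial in $n$ of degree at most $3$ with coefficients in $1/m$, which I would tabulate once.

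The main work, and the expected obstacle, is the combinatorial bookkeeping. The cross terms between the two quadratic forms must be summed by counting how many index pairs fall into each overlap class. For example, among pairs of spine edges $\{j,j+1\},\{k,k+1\}$ there are $m-1$ identical pairs, $2(m-2)$ ordered pairs sharing a node, and $(m-1)^2-(m-1)-2(m-2)$ disjoint pairs. The linear coefficients $c_j$ also differ at the boundary, which forces separate counts for nodes $1$ and $m$ versus interior nodes. Because these counts are only valid when the patterns actually occur, I would treat $m=2$ (and check $m=3$) separately by direct computation. The case $m=2$ has no interior nodes, and there
\begin{align*}
R_n^{[1]}=(I_1+1)(I_2+1)+I_1(I_1+1)+I_2(I_2+1)
\end{align*}
with $I_2=n-I_1$ binomial, so it reduces to moments of a single binomial variable; I would then verify that the general formula specializes correctly.

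Finally I would collect powers of $n$. The leading $n^3$ coefficient should vanish at $m=2$, consistent with the factor $(m-2)$. As a consistency check, I would confirm that the $n^2$ and $n$ coefficients match the stated $(3m^3+3m^2-30m+10)/m^4$ and $(17m^3-67m^2+28m-6)/m^4$, and spot-check small cases such as $n=1$ (where $\Var$ is easily computed by hand) against the closed form.
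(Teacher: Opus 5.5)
Your method of expanding $R_n^{[1]}$ as a quadratic polynomial in $I^{(n)}$ and evaluating monomial covariances from multinomial mixed moments is the one the paper has in mind. The paper gives no details; it only remarks that the computation is analogous to its appendix calculation of $\Var(W_n)$. The genuine problem is the last step of your plan. You expect the checks at $m=2$ and $m=3$ to confirm the general formula, but they will fail, because the displayed formula is false in those two cases.

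For $m=2$, your own reduction gives $R_n^{[1]}=(n+1)^2-I_1I_2=(n+1)^2-\tfrac{n^2}{4}+\bigl(I_1-\tfrac n2\bigr)^2$. Hence $\Var(R_n^{[1]})=n(n-1)/8$, while the stated expression equals $-(7n^2+41n)/8<0$. Already for $n=1$ the index is deterministic, $R_1^{[1]}\equiv 4$.

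For $m=3$ and $n=1$, the index equals $8,9,8$ according as the child hangs at spine node $1,2,3$. So the variance is $2/9$, whereas the formula gives $-4/9$. In general, for $m=3$ one gets $(2n^3+28n^2-12n)/81$, which is the stated value plus $2n/3$.

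Your planned sanity check of the mean also fails at $m=2$: the quoted $\E[R_n^{[1]}]$ gives $3$ for $n=1$, but $R_1^{[1]}\equiv 4$.

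The breakdown is exactly the pattern-counting issue you anticipated, but your description of the linear part is too coarse to locate it. Write $D_j=I_j+e_j$ with $e_1=e_m=1$, $e_j=2$ otherwise, and $e_0=e_{m+1}:=0$. Then the coefficient of $I_k$ is $e_{k-1}+e_k+e_{k+1}$. For $m\ge4$ this gives $3,5,6,\dots,6,5,3$, so there are three classes, not just boundary versus interior.

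Now put $X_j=I_j-n/m$ and use $\sum_jX_j=0$. For every $m\ge2$ one finds $R_n^{[1]}-\E[R_n^{[1]}]=L+Q-\E[Q]$, where
\[
L=-\Bigl(3+\frac nm\Bigr)(X_1+X_m)-(X_2+X_{m-1}),\qquad Q=\sum_{j=1}^{m-1}X_jX_{j+1}+\sum_{j=1}^{m}X_j^2 .
\]
If $1,2,m-1,m$ are distinct, i.e.\ $m\ge4$, the three pieces are:
\begin{itemize}
\item $\Var(L)=\frac{2(m-2)n^3}{m^4}+\frac{(12m-32)n^2}{m^3}+\frac{(20m-64)n}{m^2}$;
\item $2\Cov(L,Q)=-\frac{4(m-2)n^2}{m^4}-\frac{(12m-32)n}{m^3}$;
\item $\Var(Q)=\frac{(3m^3-9m^2+6m+2)(n^2-n)+2(m-2)n}{m^4}$.
\end{itemize}
These add up exactly to the stated formula.

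For $m=3$, however, $X_2+X_{m-1}=2X_2$ has variance $8n/9$ instead of $2n/9$, which adds $2n/3$; the other pieces are unchanged. For $m=2$ one has $L\equiv 0$, leaving only $\Var(Q)=n(n-1)/8$.

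So your computation proves the lemma for $m\ge4$ only. You should restrict the claim to $m\ge4$ and state the cases $m=2,3$ separately with the values above, rather than try to verify them against the general formula.
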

\noindent
For general $\alpha\in\R$ we have asymptotic expansions of the mean as well as limit distributions. Note that the case $\alpha=0$ where $R_n^{[\alpha]}$ is deterministic is trivial but also covered by the following statement:
\begin{thm}\label{clt:Randic}
   For all $m\ge 2$ and all $\alpha\in \R$ we have
    \begin{align*}
        \E\left[R_n^{[\alpha]}\right]&=\frac{m-1}{m^{2\alpha}}n^{2\alpha}+\frac{1}{m^\alpha}n^{\alpha+1}\\
        &\quad +\left(\frac{2\alpha(m-1)}{m^\alpha}+(m-1)\frac{2\alpha+\alpha(\alpha-1)}{2m^\alpha}\right)n^\alpha+o(n^\delta)
    \end{align*}
    with $\delta=\max\{2\alpha,\alpha\}$.\\
    For $m=2$ and $(N_1,N_2)$ defined in \eqref{N_i} we have for all $\alpha \in \R$
    \begin{align*}
        \frac{R_n^{[\alpha]}-\E\big[R_n^{[\alpha]}\big]}{n^\alpha} \overset{d}{\longrightarrow} \frac{(2\alpha+\alpha(\alpha-1))}{2^\alpha}\left(N_1^2+N_2^2-\frac{1}{2}\right).
    \end{align*}
    Let $m \ge 3$.
    \begin{enumerate}
    \item[(i)] For $\alpha > \frac{1}{2}$ it holds that
    \begin{align*}
       \frac{R_n^{[\alpha]}-\E\big[R_n^{[\alpha]}\big]}{n^{2\alpha-1/2}} \overset{d}{\longrightarrow} %
       \mathcal{N}\left(0,\frac{2\alpha^2(m-2)}{m^{4\alpha}}\right).
    \end{align*}
    \item[(ii)] For $\alpha < \frac{1}{2}$ it holds that
    \begin{align*}
        \frac{R_n^{[\alpha]}-\E\big[R_n^{[\alpha]}\big]}{n^\alpha} \overset{d}{\longrightarrow} \frac{2\alpha+\alpha(\alpha-1)}{2m^{\alpha-1}}\left(\sum_{i=1}^m N_i^2-\frac{m-1}{m}\right).
    \end{align*}
    \item[(iii)] For $\alpha=\frac{1}{2}$ it holds that
    \begin{align*}
        \frac{R_n^{[1/2]}-\E\big[R_n^{[1/2]}\big]}{\sqrt{n}}\overset{d}{\longrightarrow} \frac{1}{2}(N_1+N_m)+\frac{3\sqrt{m}}{8}\left(\sum_{i=1}^m N_i^2-\frac{m-1}{m}\right).
    \end{align*}
    \end{enumerate}
\end{thm}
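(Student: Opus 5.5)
The proof rests on an exact decomposition of $R_n^{[\alpha]}$ in terms of $I^{(n)}$, followed by a second-order Taylor expansion around $n/m$. Every edge is either a spine–spine edge $\{j,j+1\}$ or joins spine node $j$ to one of its $I_j^{(n)}$ children, and each child is a leaf of degree $1$. Hence
\begin{align*}
R_n^{[\alpha]}=\sum_{j=1}^m I_j^{(n)}\big(D_j^{(n)}\big)^\alpha+\sum_{j=1}^{m-1}\big(D_j^{(n)}D_{j+1}^{(n)}\big)^\alpha .
\end{align*}
I will write $I_j^{(n)}=n/m+\sqrt n X_j^{(n)}$ and $D_j^{(n)}=n/m+c_j+\sqrt n X_j^{(n)}$, with $c_1=c_m=1$ and $c_j=2$ otherwise. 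By the multivariate central limit theorem of Section~\ref{mclt}, $X^{(n)}\to(N_1,\dots,N_m)$ in distribution. The key structural fact is that $\sum_j X_j^{(n)}=0$ identically, because $\sum_j I_j^{(n)}=n$.

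\textbf{Child edges.} Expanding $(1+m(c_j+\sqrt nX_j)/n)^\alpha$ to second order gives
\begin{align*}
I_jD_j^\alpha=\Big(\frac nm\Big)^{\alpha+1}\Big[1+(\alpha+1)\frac{mX_j}{\sqrt n}+\frac{\alpha c_jm+m^2\big(\alpha+\tfrac{\alpha(\alpha-1)}2\big)X_j^2}{n}+O_\Prob(n^{-3/2})\Big].
\end{align*}
Summing over $j$, the linear terms cancel. The child edges therefore contribute a deterministic part $n^{\alpha+1}/m^\alpha+\alpha(2m-2)n^\alpha/m^\alpha$. They also contribute a fluctuation
\begin{align*}
\frac{2\alpha+\alpha(\alpha-1)}{2m^{\alpha-1}}\,n^\alpha\sum_j X_j^2+o_\Prob(n^\alpha).
\end{align*}

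\textbf{Spine edges.} Each spine term is
\begin{align*}
(D_jD_{j+1})^\alpha=(n/m)^{2\alpha}\big[1+\alpha m(X_j+X_{j+1})/\sqrt n+O_\Prob(n^{-1})\big].
\end{align*}
Using $\sum_j X_j=0$, we get $\sum_{j=1}^{m-1}(X_j+X_{j+1})=-(X_1+X_m)$. So the spine part is $(m-1)(n/m)^{2\alpha}-\alpha m^{1-2\alpha}n^{2\alpha-1/2}(X_1+X_m)+O_\Prob(n^{2\alpha-1})$.

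\textbf{Limit laws.} From the covariance of $(N_1,\dots,N_m)$, $\Var(N_1+N_m)=2(m-2)/m^2$, so the linear spine term has limiting variance $2\alpha^2(m-2)/m^{4\alpha}$. For $m=2$ we have $N_1+N_2=0$, so only the quadratic child term survives. Comparing the exponents $2\alpha-\tfrac12$ and $\alpha$ then yields the three regimes (i)–(iii), via the continuous mapping theorem and Slutsky's theorem.
\begin{itemize}
\item For $\alpha>1/2$ the linear spine term dominates and gives the normal limit in (i).
\item For $\alpha<1/2$ the quadratic child term dominates and gives (ii). The same computation with $m=2$ gives the stated $m=2$ limit, since there $m^{1-\alpha}=2^{1-\alpha}$ and $\tfrac{m-1}{m}=\tfrac12$.
\item For $\alpha=1/2$ both terms are of order $\sqrt n$, with coefficients $-\tfrac12$ and $\tfrac38\sqrt m$. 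The sign of the linear term is irrelevant, since $(N_i)\mapsto(-N_i)$ preserves the joint law and leaves $\sum_i N_i^2$ unchanged. This gives (iii).
\end{itemize}
The centring constant $-\frac{m-1}{m}$ is $\E\big[\sum_jX_j^2\big]$, which equals $m\cdot\frac1m(1-\frac1m)$ exactly.

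\textbf{Main obstacle: the mean expansion.} The real work is justifying $\E[R_n^{[\alpha]}]$ to precision $o(n^\delta)$, and showing that centring by $\E[R_n^{[\alpha]}]$ rather than by the deterministic main terms changes nothing in the limit. I would prove this in three steps.
\begin{enumerate}
\item Split on the event $A_n=\{\max_j|X_j^{(n)}|\le n^{1/6}\}$. On $A_n$ the Taylor remainders are bounded deterministically by $C n^{\delta}\,n^{-1/2}\,\big(1+\sum_j|X_j|^3\big)$. All moments of $X^{(n)}$ are bounded uniformly in $n$, since the multinomial coordinates are sums of bounded independent indicators, so the expected remainder on $A_n$ is $o(n^\delta)$.
\item On $A_n^c$, which has probability $e^{-cn^{1/3}}$ by Chernoff/Hoeffding bounds, $R_n^{[\alpha]}$ is at most polynomial in $n$. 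This holds for negative $\alpha$ too, because $D_j\ge1$. Hence this part contributes negligibly to the mean.
\item With the mean expansion in hand, $(R_n^{[\alpha]}-\E[R_n^{[\alpha]}])/\text{scale}$ equals the random main term, centred by its exact expectation, plus $o_\Prob(1)+o(1)$. Slutsky's theorem then finishes the proof.
\end{enumerate}
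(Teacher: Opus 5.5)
\textbf{The $m=2$ part of your argument fails for $\alpha\ge1$, and the statement itself is false there.} Your edge-by-edge expansion is the paper's Taylor expansion of $g$ about $\mu^{(n)}$ with different bookkeeping. For $m\ge 3$ your coefficients are correct, as are the identity $\sum_{j=1}^{m-1}(X_j+X_{j+1})=-(X_1+X_m)$ (the paper has a harmless sign slip here) and the symmetry remark in (iii).

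The step that fails is the claim that for $m=2$ ``only the quadratic child term survives''. When the linear spine term vanishes, the spine edge still carries its own second-order fluctuation, which you absorbed into $O_\Prob(n^{2\alpha-1})$. For $m=2$ one has exactly $D_1^{(n)}D_2^{(n)}=(n/2+1)^2-n\big(X_1^{(n)}\big)^2$. Hence $\big(D_1^{(n)}D_2^{(n)}\big)^\alpha=(n/2+1)^{2\alpha}-\alpha\,2^{2-2\alpha}n^{2\alpha-1}\big(X_1^{(n)}\big)^2+O_\Prob(n^{2\alpha-2})$, and this fluctuation is negligible against $n^\alpha$ only if $\alpha<1$.

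For $\alpha\ge1$ no argument can close the gap. At $\alpha=1$ a direct count gives $R_n^{[1]}=(n+1)^2-I_1^{(n)}I_2^{(n)}$. Therefore $(R_n^{[1]}-\E[R_n^{[1]}])/n=\big(X_1^{(n)}\big)^2-\frac14\to N_1^2-\frac14$, which is half of the claimed limit $N_1^2+N_2^2-\frac12=2N_1^2-\frac12$. For $\alpha>1$ the fluctuations are of order $n^{2\alpha-1}$, so the sequence normalised by $n^\alpha$ is not even tight.

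The paper's proof makes the same mistake. It drops the $n^{2\alpha-1}$ part of the quadratic Taylor term on the grounds that the linear term dominates it, but the linear term is identically zero when $m=2$. Your argument, and the statement, are fine for $m=2$ with $\alpha<1$.

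\textbf{A second, repairable point is the centring in your step 3.} An expansion of $\E[R_n^{[\alpha]}]$ with error $o(n^\delta)$ does not help at the normalisations $n^{2\alpha-1/2}$ and $n^\alpha$ when $\alpha>0$, because both are themselves $o(n^\delta)$. What you need is that the expected Taylor remainder is $o$ of the normalisation. Your lumped bound $Cn^{\delta-1/2}\big(1+\sum_j|X_j|^3\big)$ is not: it has the same order as the normalisation in (i) and (iii), and for $m=2$ with $\alpha\ge\frac12$ it is at least that order.

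Keep the two remainders separate instead. On $A_n$ the spine remainder is $O\big(n^{2\alpha-1}(1+\sum_jX_j^2)\big)$ and the child remainder is $O\big(n^{\alpha-1/2}(1+\sum_j|X_j|^3)\big)$. Combined with your bound on $A_n^c$, both are $o$ of the relevant normalisation in $L^1$ in cases (i)--(iii) and for $m=2$ with $\alpha<1$. With that, step 3 goes through as you describe.
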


For $m\ge 3$ in Theorem \ref{clt:Randic}, note that the distribution of $\frac{1}{2}(N_1+N_m)$ appearing in the case $\alpha=\frac{1}{2}$ is given by the normal $\mathcal{N}(0,2\alpha^2(m-2)/(m^{4\alpha}))$ distribution appearing in the case $\alpha>\frac{1}{2}$. Therefore, in the case of $\alpha=\frac{1}{2}$, the limit distribution can be interpreted as having contributions from both cases $\alpha<\frac{1}{2}$ and $\alpha>\frac{1}{2}$. Notably, there is a discontinuity in the limit distribution at $\alpha=\frac{1}{2}$.
 
\section{The basic probabilistic tools}\label{mclt}
\noindent
Since the studied topological indices are continuous functions of the variables $I^{(n)}_1,\dots,I^{(n)}_m$ with the multivariate distribution $M(n;1/m,\dots,1/m)$, we will use the multivariate Central Limit Theorem in combination with the Continuous Mapping Theorem (in the form of our Lemma \ref{Continuous Mapping Theorem}) and Slutsky's Theorem (in the form of Lemma \ref{Slutsky})  to derive the distributional convergence results.
\begin{lem}\label{Central Limit Theorem}\cite[Thm. 29.5.]{billingsley95}
    Let $(X_i)_{i \in \N}$ be a sequence of i.i.d random variables in $\R^d$ with $\Vert X_1\Vert$ being square-integrable and having a regular covariance matrix $C$. Then, we have for the sum $S_n:=\sum_{i=1}^n X_i$
    \begin{align*}
        \frac{S_n-n\E{X_1}}{\sqrt{n}}\stackrel{d}{\longrightarrow} Y,
    \end{align*}
    where $L(Y)=\mathcal{N}(0,\Sigma)$ and $\mathcal{N}(\mu,C)$ denotes a $d$-dimensional normal distribution with mean $\mu$ and covariance matrix $C$.
\end{lem}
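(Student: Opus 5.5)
This lemma is the multivariate central limit theorem quoted from Billingsley, so no new argument is needed beyond recording how it reduces to the one-dimensional case. (The statement says ``regular covariance matrix $C$'' and then writes $\Sigma$; I read both as the covariance matrix of $X_1$, so $\Sigma=C$.) The plan is the Cram\'er--Wold device followed by the classical Lindeberg--L\'evy theorem on the real line.

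First step: fix $a\in\R^d$ and set $Y_i:=a^t(X_i-\E X_1)$. The $Y_i$ are i.i.d.\ real random variables, and square-integrability of $\Vert X_1\Vert$ together with Cauchy--Schwarz gives $\E Y_1^2\le \Vert a\Vert^2\E\Vert X_1-\E X_1\Vert^2<\infty$. Also $\E Y_1=0$ and $\Var(Y_1)=a^tCa$. If $a^tCa>0$, which always holds for $a\neq 0$ because $C$ is regular, the univariate central limit theorem gives
\[
a^t\,\frac{S_n-n\E X_1}{\sqrt n}=\frac{1}{\sqrt n}\sum_{i=1}^n Y_i \stackrel{d}{\longrightarrow}\mathcal{N}(0,a^tCa).
\]
If $a=0$ the left side is identically $0$, and the convergence to $\mathcal{N}(0,0)=\delta_0$ is trivial.

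Second step: let $Y\sim\mathcal{N}(0,C)$. Then $a^tY\sim\mathcal{N}(0,a^tCa)$, so every linear projection of the normalized sum converges in distribution to the corresponding projection of $Y$. The Cram\'er--Wold theorem (convergence of characteristic functions evaluated along each ray $ta$) then yields convergence in distribution of the vectors themselves.

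There is no real obstacle here, since regularity of $C$ is not even needed. The one point worth flagging for the later applications is the one that does matter. The increments of $I^{(n)}$ are unit vectors $e_J$ with $J$ uniform on $\{1,\dots,m\}$, and their covariance $\frac1m\mathrm{Id}-\frac1{m^2}\mathbf 1\mathbf 1^t$ is singular. The Cram\'er--Wold proof above covers this case verbatim, and this is how I would justify applying the lemma to the multinomial vector to obtain the limit $(N_1,\dots,N_m)$.
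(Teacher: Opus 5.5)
Your proof is correct. It is the standard argument behind the cited result (Cram\'er--Wold reduction to the one-dimensional Lindeberg--L\'evy theorem), which the paper quotes from Billingsley without proof.

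Your closing remark catches a genuine error in the paper. The matrix $\Sigma=\frac1m\mathrm{Id}-\frac1{m^2}\mathbf 1\mathbf 1^t$ satisfies $\Sigma\mathbf 1=0$, so the claim in the proof of Corollary~\ref{clt:I_n} that $\Sigma$ is regular is false. The corollary survives precisely because, as you say, regularity is not needed.

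Your argument does not cover the singular case quite ``verbatim'', though. There, some $a\neq 0$ have $a^t\Sigma a=0$, and your first step only handles $a=0$ in the degenerate way. The fix is one line: for such $a$, $a^t(X_1-\E X_1)$ has mean $0$ and variance $0$, so it vanishes almost surely, and the projection converges trivially to $\mathcal{N}(0,0)$, exactly as in your $a=0$ case.
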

\begin{cor}\label{clt:I_n}
    Let $I^{(n)}:=\left(I_1^{(n)},\dots,I_m^{(n)}\right)\overset{d}{=}M(n;1/m,\dots,1/m)$. Then, with $\mu^{(n)}=(n/m,\ldots,n/m)^t$, we have
    \begin{align}\label{N_i}
        I^{(n),\ast}:=\frac{I^{(n)}-\mu^{(n)}}{\sqrt{n}}\overset{d}{\longrightarrow} N=(N_1,\ldots,N_m),
    \end{align}
    where $\mathcal{L}(N)=\mathcal{N}(0,\Sigma)$ with 
\begin{equation}\label{Covariance}
\Sigma = \left( \begin{array}{cccc}
    \frac{m-1}{m^2} &-\frac{1}{m^2}& \cdots & -\frac{1}{m^2} \\
    -\frac{1}{m^2} & \ddots & &\vdots\\
    \vdots &  & \ddots& -\frac{1}{m^2} \\
    -\frac{1}{m^2} & \dots & -\frac{1}{m^2}& \frac{m-1}{m^2}
    \end{array}
    \right).
\end{equation}
\end{cor}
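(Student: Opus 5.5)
We will realise the multinomial vector as a sum of i.i.d.\ random vectors and then apply Lemma \ref{Central Limit Theorem}. Write
\[
I^{(n)}=\sum_{k=1}^n X_k ,
\]
where $X_k\in\{e_1,\dots,e_m\}$ is the unit vector indicating the spine node chosen at step $k$. By the definition of the random caterpillar, the $X_k$ are i.i.d.\ with $\P{X_k=e_j}=1/m$. Then $\E X_1=(1/m,\dots,1/m)^t$, so $n\E X_1=\mu^{(n)}$. Moreover $\|X_1\|=1$, hence $\|X_1\|$ is trivially square-integrable. The covariance is computed directly:
\[
\Var(X_{1,j})=\tfrac1m-\tfrac1{m^2}=\tfrac{m-1}{m^2},\qquad \Cov(X_{1,i},X_{1,j})=0-\tfrac1{m^2}=-\tfrac1{m^2}\quad (i\neq j).
\]
This is exactly the matrix $\Sigma$ in \eqref{Covariance}.

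The only real obstacle is that Lemma \ref{Central Limit Theorem} is stated for a regular covariance matrix, and $\Sigma$ is singular. Since $\sum_j X_{1,j}=1$ almost surely, $\Sigma\mathbf 1=0$. I would not invoke the lemma literally but circumvent this in one of two ways.

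\emph{(a) Cramér--Wold.} For any $a\in\R^m$, the scalars $a\cdot X_k$ are i.i.d.\ and bounded. If $a^t\Sigma a>0$, the one-dimensional CLT gives
\[
a\cdot I^{(n),\ast}\overset{d}{\longrightarrow}\mathcal N(0,a^t\Sigma a).
\]
If $a^t\Sigma a=0$, then $a\cdot X_1$ is almost surely constant, so $a\cdot I^{(n),\ast}=0$, which again matches $\mathcal N(0,0)=a\cdot N$ in law. Since this holds for every $a$, the Cramér--Wold device yields $I^{(n),\ast}\overset{d}{\longrightarrow}N$.

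\emph{(b) Reduction to the regular case.} Apply Lemma \ref{Central Limit Theorem} to the first $m-1$ coordinates. Their covariance matrix is $\frac1m\bigl(I_{m-1}-\tfrac1m\mathbf 1\mathbf 1^t\bigr)$, which has eigenvalues $\frac1m$ (multiplicity $m-2$) and $\frac1{m^2}$, and is therefore regular. The last coordinate of $I^{(n),\ast}$ equals minus the sum of the first $m-1$, because $\sum_j I^{(n)}_j=n$ and the entries of $\mu^{(n)}$ sum to $n$. This linear map is continuous, so the continuous mapping theorem (Lemma \ref{Continuous Mapping Theorem}) transfers the convergence to the full vector. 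The resulting limit has covariance $\Sigma$.

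Either route gives \eqref{N_i}. I would present route (b), since it uses only the stated lemmas, and note route (a) as an alternative. For $m=1$ the statement is trivial, since then $I^{(n),\ast}=0$ and $\Sigma=0$.
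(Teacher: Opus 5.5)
Your proposal is correct. It starts from the same decomposition as the paper: $I^{(n)}$ is a sum of i.i.d.\ uniformly chosen unit vectors whose covariance matrix is $\Sigma$.

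The difference is in the final step, and there your version is the sound one. The paper asserts that $\Sigma$ is regular and applies Lemma~\ref{Central Limit Theorem} directly. That assertion is false: $\Sigma\mathbf{1}=\frac1m\mathbf{1}-\frac{m}{m^2}\mathbf{1}=0$, which is equivalent to $\sum_i I_i^{(n),\ast}=0$, a fact the paper itself uses as \eqref{sum:I_n}. So $\Sigma$ has rank $m-1$, and the lemma as stated, with its regularity hypothesis, does not literally apply. You spotted this, and both of your repairs work.

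\emph{Route (a).} This is essentially the standard Cram\'er--Wold proof of the multivariate CLT. It also shows that the regularity hypothesis is superfluous: the limit $\mathcal{N}(0,\Sigma)$ is simply a degenerate normal law concentrated on the hyperplane $\{x:\sum_i x_i=0\}$.

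\emph{Route (b).} This stays within the regular case. Your eigenvalues $\frac1m$ (multiplicity $m-2$) and $\frac1{m^2}$ are right. The image of $\mathcal{N}(0,\Sigma')$ under $y\mapsto\bigl(y,-\sum_i y_i\bigr)$ does have covariance $\Sigma$.

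One small correction to your remark that (b) ``uses only the stated lemmas''. Lemma~\ref{Continuous Mapping Theorem} is stated in the paper only for real-valued $f$. So you need the vector-valued mapping theorem, which is what the cited source actually proves. Alternatively, you apply the scalar version to each linear functional and conclude with Cram\'er--Wold, which brings you back to (a). For that reason (a) is arguably the cleaner write-up.
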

\begin{proof}
Let $\{e_i:i \in \{1,\dots,m\}\}$ be the canonical basis of $\R^m$. Define a random variable $X$ through $\P{X=e_i}=1/m$ for all $i \in \{1,\dots,m\}$ and let $(X_i)_{i \in \N}$ be a sequence of i.i.d. copies of $X$. We have $I^{(n)}\overset{d}{=}\sum_{i=1}^n X_i$ and an easy calculation yields that the covariance matrix of $X$ is given by \eqref{Covariance}. Since $\Sigma$ given in \eqref{Covariance} is a regular matrix, the statement directly follows from Lemma \ref{Central Limit Theorem}. 
\end{proof}
Note that Corollary \ref{clt:I_n} has also been used for the analysis of random caterpillar graphs in \cite{zhde19}.

For further use and the reader's convenience we further recall the Continuous Mapping Theorem in the form used below.
\begin{lem}\label{Continuous Mapping Theorem}\cite[The Mapping Theorem, page 20]{bill99}
    Let $(X_n)_{n \in \N},X$ be $\R^m$ valued random variables and $f:\R^m \to \R$ a continuous function. If $X_n \overset{d}{\longrightarrow} X$, then we have $f(X_n) \overset{d}{\longrightarrow} f(X)$.
\end{lem}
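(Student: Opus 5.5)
The plan is to reduce the statement directly to the definition of convergence in distribution via bounded continuous test functions. Recall that for random vectors $Y_n, Y$ in $\R^d$, $Y_n \overset{d}{\longrightarrow} Y$ means $\E[g(Y_n)] \to \E[g(Y)]$ for every bounded continuous $g:\R^d\to\R$. This is either the definition or, if convergence in distribution is defined through distribution functions, the Portmanteau theorem, which I would invoke as standard. So to show $f(X_n)\overset{d}{\longrightarrow} f(X)$ it suffices to fix an arbitrary bounded continuous $h:\R\to\R$ and prove $\E[h(f(X_n))]\to\E[h(f(X))]$.

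The key step is to consider the composition $g:=h\circ f:\R^m\to\R$. It is continuous, being a composition of continuous maps. It is bounded, since $\sup_x |h(f(x))| \le \sup_y |h(y)| < \infty$. Hence $g$ is an admissible test function for the assumed convergence $X_n \overset{d}{\longrightarrow} X$ in $\R^m$. This gives
\begin{align*}
\E[h(f(X_n))]=\E[g(X_n)]\longrightarrow \E[g(X)]=\E[h(f(X))].
\end{align*}
Since $h$ was arbitrary, $f(X_n)\overset{d}{\longrightarrow} f(X)$ follows.

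There is essentially no obstacle. The only point requiring care is the choice of characterization of convergence in distribution: with the test-function formulation the argument is immediate. If one insisted on distribution functions, I would first pass through the Portmanteau theorem. Alternatively, I would use Skorokhod's representation: realize $X_n \to X$ almost surely on a common probability space, note that continuity of $f$ gives $f(X_n)\to f(X)$ almost surely, and conclude, since almost sure convergence implies convergence in distribution.
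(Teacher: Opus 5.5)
Your proof is correct. The paper gives no proof of its own here; it only cites Billingsley's Mapping Theorem. In Billingsley the result is proved in greater generality: $f$ need only be measurable with $\Prob(X\in D_f)=0$, where $D_f$ is the set of discontinuity points of $f$. The argument there uses the closed-set form of the Portmanteau theorem together with the inclusion $\overline{f^{-1}(F)}\subset f^{-1}(F)\cup D_f$ for closed $F$. This gives $\limsup_n \Prob(f(X_n)\in F)\le \Prob\bigl(X\in \overline{f^{-1}(F)}\bigr)\le \Prob(f(X)\in F)$.

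Your test-function argument is the immediate special case for continuous $f$. It is shorter, but it does not extend to discontinuous $f$, because $h\circ f$ is then no longer an admissible test function. Your Skorokhod alternative does extend, since $f(X_n)\to f(X)$ almost surely still holds on the event $\{X\notin D_f\}$.

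The continuous version is all the paper needs, because the mapping theorem is only applied to polynomials in $I^{(n),\ast}$.
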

To argue that certain terms are asymptotically negligible, we use the following form of Slutsky's theorem: 
\begin{lem}\label{Slutsky}
\cite[Thm.~3.1]{bill99}
Let $(X_n)_{n \in \N}$,$(Y_n)_{n \in \N}$,$X$ be real-valued random variables and $a \in \R$ a constant. If $X_n \overset{d}{\longrightarrow}X$ and $Y_n \overset{\Prob}{\longrightarrow} a$, then $X_n+Y_n \overset{d}{\longrightarrow}X+a$.
\end{lem}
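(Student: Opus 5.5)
The statement is Slutsky's theorem in the special case of a sum with a constant limit. I would prove it directly from the definition of convergence in distribution via distribution functions, rather than through joint convergence. Write $F$ for the distribution function of $X$, so that $X+a$ has distribution function $t\mapsto F(t-a)$. Fix a continuity point $t$ of this function, i.e.\ $t-a$ is a continuity point of $F$. The goal is
\begin{align*}
\P{X_n+Y_n\le t}\to F(t-a).
\end{align*}

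For the upper bound, fix $\varepsilon>0$. On the event $\{|Y_n-a|\le\varepsilon\}$ we have $X_n+Y_n\le t$ only if $X_n\le t-a+\varepsilon$. Hence
\begin{align*}
\P{X_n+Y_n\le t}\le \P{X_n\le t-a+\varepsilon}+\P{|Y_n-a|>\varepsilon}.
\end{align*}
For the lower bound, $X_n\le t-a-\varepsilon$ together with $|Y_n-a|\le\varepsilon$ forces $X_n+Y_n\le t$. Hence
\begin{align*}
\P{X_n+Y_n\le t}\ge \P{X_n\le t-a-\varepsilon}-\P{|Y_n-a|>\varepsilon}.
\end{align*}

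Since $F$ has at most countably many discontinuities, I would choose $\varepsilon$ such that both $t-a\pm\varepsilon$ are continuity points of $F$. Letting $n\to\infty$ then uses two facts: $X_n\overset{d}{\longrightarrow}X$ at these continuity points, and $\P{|Y_n-a|>\varepsilon}\to0$ by the assumption $Y_n\overset{\Prob}{\longrightarrow}a$. This gives
\begin{align*}
F(t-a-\varepsilon)\le\liminf_{n}\P{X_n+Y_n\le t}\le\limsup_{n}\P{X_n+Y_n\le t}\le F(t-a+\varepsilon).
\end{align*}
Finally let $\varepsilon\downarrow0$ along such admissible values. Right-continuity of $F$ handles the upper side, and continuity of $F$ at $t-a$ handles the lower side, so both bounds converge to $F(t-a)$.

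The only point needing care is the choice of $\varepsilon$ avoiding atoms of $F$, and this is routine by countability. As an alternative route, one could first show $(X_n,Y_n)\overset{d}{\longrightarrow}(X,a)$ and then apply Lemma \ref{Continuous Mapping Theorem} with $f(x,y)=x+y$. However, proving the joint convergence needs essentially the same $\varepsilon$-argument, so I would use the direct version above.
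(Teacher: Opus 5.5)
Your proof is correct. Both event inclusions hold, admissible $\varepsilon$ with $t-a\pm\varepsilon$ continuity points of $F$ exist in every interval $(0,\delta)$ by countability, and continuity of $F$ at $t-a$ closes the sandwich.

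The paper gives no proof of its own; it only cites \cite[Thm.~3.1]{bill99}. That theorem is the metric-space ``converging together'' result: if $X_n\overset{d}{\longrightarrow}X$ and $\rho(X_n,Y_n)\to 0$ in probability, then $Y_n\overset{d}{\longrightarrow}X$. It is proved with the closed-set form of the portmanteau theorem. For closed $F$ with closed $\varepsilon$-neighbourhood $F_\varepsilon$, one bounds $\P{Y_n\in F}\le\P{X_n\in F_\varepsilon}+\P{\rho(X_n,Y_n)\ge\varepsilon}$, takes $\limsup_n$, and lets $F_\varepsilon\downarrow F$.

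The lemma as stated then follows in two steps. First, $X_n+a\overset{d}{\longrightarrow}X+a$ by Lemma \ref{Continuous Mapping Theorem}. Second, apply the cited theorem to the pair $X_n+a$ and $X_n+Y_n$, whose distance is $|Y_n-a|$.

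Your argument is essentially the one-dimensional shadow of that proof, with $F=(-\infty,t-a]$. Working directly with distribution functions makes it self-contained and elementary. The price is that you must dodge the atoms of $F$; in the portmanteau version the closed-set bound holds for every $\varepsilon$, so that step disappears. Your version is also tied to real-valued variables, whereas the cited route works verbatim in any metric space.
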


\noindent

\section{Proofs}\label{proofs}
\noindent
We have tried to make the proofs as short and simple as possible. More detailed calculations can be found in the appendix.  To prove Theorems \ref{clt:Wiener}, \ref{clt:Gini} and \ref{clt:Zagreb}, we need  mixed moments of $I^{(n)}$.
\begin{lem}\label{mix_mom}
    For $I^{(n)}$ as in Corollary \ref{clt:I_n} we have
    \begin{align*}
    \E\left[{I_i^{(n)}}\left(I_i^{(n)}-1\right)\right]=\frac{n(n-1)}{m^2}
    \end{align*}
    and
    \begin{align*}
        \E\left[I_i^{(n)}I_j^{(n)}\right]=\frac{n(n-1)}{m^2} \text{ for } i \neq j.
    \end{align*}
\end{lem}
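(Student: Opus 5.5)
We will use the representation from the proof of Corollary \ref{clt:I_n}: $I^{(n)}\overset{d}{=}\sum_{k=1}^n X_k$, where $X_1,\dots,X_n$ are i.i.d. copies of $X$ with $\P{X=e_i}=1/m$. In coordinates this reads $I_i^{(n)}=\sum_{k=1}^n \mathbf{1}\{X_k=e_i\}$. Both identities then follow by expanding the products as double sums over $k,l\in\{1,\dots,n\}$ and splitting into diagonal terms ($k=l$) and off-diagonal terms ($k\neq l$).

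For the second identity, fix $i\neq j$ and write
\begin{align*}
I_i^{(n)}I_j^{(n)}=\sum_{k=1}^n\sum_{l=1}^n \mathbf{1}\{X_k=e_i\}\mathbf{1}\{X_l=e_j\}.
\end{align*}
The diagonal terms vanish, since $X_k$ cannot equal $e_i$ and $e_j$ at the same time. For $k\neq l$, independence gives $\E[\mathbf{1}\{X_k=e_i\}\mathbf{1}\{X_l=e_j\}]=1/m^2$. There are $n(n-1)$ ordered pairs with $k\neq l$, so $\E[I_i^{(n)}I_j^{(n)}]=n(n-1)/m^2$.

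For the first identity, expand
\begin{align*}
\left(I_i^{(n)}\right)^2=\sum_{k,l}\mathbf{1}\{X_k=e_i\}\mathbf{1}\{X_l=e_i\}.
\end{align*}
The diagonal terms sum to $\sum_k \mathbf{1}\{X_k=e_i\}=I_i^{(n)}$, since indicators are idempotent. Subtracting $I_i^{(n)}$ therefore leaves exactly the off-diagonal sum. Each off-diagonal term has expectation $1/m^2$ by independence, so $\E[I_i^{(n)}(I_i^{(n)}-1)]=n(n-1)/m^2$.

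As a cross-check, the same result follows from the factorial-moment generating function $\E\bigl[\prod_j t_j^{I_j^{(n)}}\bigr]=\bigl(\tfrac1m\sum_j t_j\bigr)^n$: differentiate twice and evaluate at $t=(1,\dots,1)$. There is no real obstacle here. The only point needing care is the bookkeeping of the diagonal terms: they vanish for $i\neq j$ and equal $I_i^{(n)}$ for $i=j$.
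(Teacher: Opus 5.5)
Your proof is correct. It takes a more elementary route than the paper, which does not expand into indicators. The paper quotes the standard facts $\E[I_i^{(n)}]=n/m$ and $\Var(I_i^{(n)})=n(m-1)/m^2$ (binomial), together with $\Cov(I_i^{(n)},I_j^{(n)})=-n/m^2$ (multinomial). It then gets both identities by the algebra $\E[I_i^{(n)}(I_i^{(n)}-1)]=\Var(I_i^{(n)})+(n/m)^2-n/m$ and $\E[I_i^{(n)}I_j^{(n)}]=\Cov(I_i^{(n)},I_j^{(n)})+(n/m)^2$.

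The paper's version is shorter, but it relies on the multinomial covariance formula, which is usually proved by exactly your diagonal/off-diagonal indicator argument. Your version is self-contained and in effect derives that covariance.

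Your approach also scales directly. The same bookkeeping, or differentiating $\bigl(\frac{1}{m}\sum_j t_j\bigr)^n$ more times in your generating-function cross-check, gives all mixed factorial moments. For example, it yields $\E[I_1I_2I_3I_4]=n(n-1)(n-2)(n-3)/m^4$, which the paper's appendix instead takes from the literature.

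Working with the distributional representation from Corollary~\ref{clt:I_n} is legitimate, since expectations depend only on the law. In the caterpillar model $I^{(n)}$ is in fact literally such a sum of indicators of the spine choices.
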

\begin{proof}
Since $I_i^{(n)}$ is binomial$(n,1/m)$ distributed, we have $\E\left[I_i^{(n)}\right]=n/m$ and $\Var\left(I_i^{(n)}\right)=n(m-1)/m^2$. This implies
\begin{align*}
\E\left[I_i^{(n)}\left(I_i^{(n)}-1\right)\right]=\frac{n(n-1)}{m^2}.
\end{align*}
Using $\Cov\left(I_i^{(n)},I_j^{(n)}\right)=-n/m^2$, we obtain
\begin{align*}
  \E\left[I_i^{(n)}I_j^{(n)}\right]=\Cov\left(I_i^{(n)}I_j^{(n)}\right)+\E\left[I_i^{(n)}\right]\E\left[I_j^{(n)}\right]=\frac{n(n-1)}{m^2}
\end{align*}
 for all $i \neq j$. 
\end{proof}
\noindent
To prove Theorem \ref{clt:Wiener}, we must first derive a more suitable form of the Wiener index in a caterpillar. This will allow us to exploit the asymptotic behaviour of $I_i^{(n),\ast}$ to obtain an asymptotic result for $W_n$.
\begin{lem}\label{Wiener:caterpillar}
    In the case of a caterpillar, we have
    \begin{align*}
    \begin{split}
        W_n=&\sum_{i<j} (j-i+2)I_i^{(n)}I_j^{(n)}+\sum_{i,j=1}^m (\vert j-i\vert+1)I_j^{(n)}\\
        &+\sum_{i=1}^m I_i^{(n)}(I_i^{(n)}-1)+\frac{(m-1)m(m+1)}{6}.
    \end{split}
    \end{align*}
\end{lem}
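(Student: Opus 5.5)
Label the spine nodes $s_1,\dots,s_m$ and let the children of $s_i$ be the $I_i^{(n)}$ leaves attached to $s_i$. Write
\[
W_n=\tfrac12\sum_{u,v}\dist(u,v)
\]
as the sum of $\dist(u,v)$ over unordered pairs $\{u,v\}$ of distinct nodes. We split these pairs into four disjoint classes and count each class separately, using only the tree structure: the unique path between two nodes runs along the spine, with one extra edge at each end that is a child.

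\emph{(a) Spine--spine pairs.} For $i<j$ we have $\dist(s_i,s_j)=j-i$. Summing over $i<j$ gives
\[
\sum_{d=1}^{m-1}(m-d)\,d=\frac{(m-1)m(m+1)}{6},
\]
which is the constant term.

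\emph{(b) Child--spine pairs.} A child $c$ of $s_j$ and a spine node $s_i$ satisfy $\dist(c,s_i)=|j-i|+1$. This holds also for $i=j$, where the distance is $1$. Summing over all children and all spine nodes yields $\sum_{i,j=1}^m(|j-i|+1)I_j^{(n)}$.

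\emph{(c) Two children of the same spine node.} Two distinct children of $s_i$ are at distance $2$. There are $\binom{I_i^{(n)}}{2}$ such pairs, so this class contributes
\[
2\binom{I_i^{(n)}}{2}=I_i^{(n)}\bigl(I_i^{(n)}-1\bigr),
\]
and summing over $i$ gives the third sum.

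\emph{(d) Children of different spine nodes.} A child of $s_i$ and a child of $s_j$ with $i<j$ are at distance $j-i+2$. There are $I_i^{(n)}I_j^{(n)}$ such pairs, giving $\sum_{i<j}(j-i+2)I_i^{(n)}I_j^{(n)}$.

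Adding (a)--(d) gives the claimed identity. The only step that needs care is the bookkeeping of unordered versus ordered pairs: the factor $\tfrac12$ in \eqref{Wiener} must be matched consistently in each class. In (b) each child--spine pair is counted exactly once, since the roles of child and spine node are distinguished. In (c) we must use $\binom{I_i}{2}$ pairs, each at distance $2$. In (d) we sum only over $i<j$. The spine sum in (a) is a routine computation, which can be checked directly for $m=2$: it gives $1$, and $(1\cdot2\cdot3)/6=1$.
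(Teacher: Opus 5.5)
Your proof is correct and follows essentially the same route as the paper: you use the same four-way split into spine--spine pairs, child--spine pairs, pairs of leaves at the same spine node and pairs of leaves at different spine nodes, with the same distances $j-i$, $|j-i|+1$, $2$ and $j-i+2$. Your explicit count $2\binom{I_i^{(n)}}{2}$ in class (c) handles the factor $\tfrac12$ a little more cleanly than the paper's wording in terms of ordered pairs.
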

\begin{proof}
There are four different terms that contribute to the Wiener index in a caterpillar. Firstly, there are  the distances between the leaves of two different spine nodes. For two fixed spine nodes $i$ and $j$, the distance between their leaves is $|j-i|+2$. Since we are considering only unordered pairs, and since the spine node $i$ has $I_i^{(n)}$ children, we obtain the first summand.  The distance between spine node $i$ and the leaves of spine node $j$ is $\vert j-i\vert+1$, which gives us the second term. The distance between two leaves that are attached to the same spine node is $2$ and the number of ordered pairs in the cluster of spine node $i$ is $I_i^{(n)}\left(I_i^{(n)}-1\right)$. Finally, the sum of the distances between the spine nodes is equal to  $\sum_{i<j} j-i=((m-1)m(m+1))/6$.
\end{proof}
\begin{proof}[Proof of Theorem \ref{clt:Wiener}]
    The expectation of $W_n$ has been derived in \cite{zhang2021} and the variance can be found in Appendix \ref{Var:W_n}. Lemma \ref{Wiener:caterpillar} implies
    \begin{align*}
    \begin{split}
        W_n-\E[W_n]=&\sum_{i<j} (j-i+2)\left(I_i^{(n)}I_j^{(n)}-\E\left[I_i^{(n)}I_j^{(n)}\right]\right)\\
        &+\sum_{i,j=1}^m (\vert j-i\vert+1)\left(I_j^{(n)}-\E\left[I_j^{(n)}\right]\right)\\
        &+\sum_{i=1}^m I_i^{(n)}\left(I_i^{(n)}-1\right)-\E\left[I_i^{(n)}\left(I_i^{(n)}-1\right)\right].
        \end{split}
    \end{align*}
    To estimate the first sum in the latter display, we use $I_i^{(n)}=\frac{n}{m}+\sqrt{n}I_i^{(n),\ast}$ and  $\E[I_i^{(n)}I_j^{(n)}]=\frac{n(n-1)}{m^2}$ to obtain
    \begin{align*}
        \begin{split}
           I_i^{(n)}I_j^{(n)}-\E\left[I_i^{(n)}I_j^{(n)}\right]
           &=\left(\frac{n}{m}+\sqrt{n}I_i^{(n),\ast}\right)\left(\frac{n}{m}+\sqrt{n}I_j^{(n),\ast}\right)-\frac{n(n-1)}{m^2}\\
           &=\frac{n^{3/2}}{m}\left(I_i^{(n),\ast}+I_j^{(n),\ast}\right)+nI_i^{(n),\ast}I_j^{(n),\ast}+\frac{n}{m^2}
        \end{split}
    \end{align*}
    Using the same substitution for the second term yields 
    \begin{align*}
       I_j^{(n)}-\E\left[I_j^{(n)}\right]= \sqrt{n}I_j^{(n),\ast}.
    \end{align*}
    For the third term, we have
    \begin{align*}
    \begin{split}
        I_i^{(n)}\left(I_i^{(n)}-1\right)&-\E\left[I_i^{(n)}\left(I_i^{(n)}-1\right)\right]\\
        &=\left(\frac{n}{m}+\sqrt{n}I_i^{(n),\ast}\right)^2-\left(\frac{n}{m}+\sqrt{n}I_i^{(n),\ast}\right)-\frac{n(n-1)}{m^2}\\
        &=\frac{2n^{3/2}}{m}I_i^{(n),\ast}+n\left(I_i^{(n),\ast}\right)^2-\sqrt{n}I_i^{(n),\ast}-\frac{n(m-1)}{m^2}.
        \end{split}
    \end{align*}
Note that
    \begin{align}\label{sum:I_n}
        \sum_{i=1}^m I_i^{(n),\ast}=0
    \end{align}
    since the $I_i^{(n)}$ sum up to $n$. This implies
    \begin{align*}
    \begin{split}
        \sum_{i=1}^m I_i^{(n)}\left(I_i^{(n)}-1\right)&-\E\left[I_i^{(n)}\left(I_i^{(n)}-1\right)\right]\\
        &=\sum_{i=1}^m \left(n\left(I_i^{(n),\ast}\right)^2-\sqrt{n}I_i^{(n),\ast}\right)-\frac{n(m-1)}{m}.
        \end{split}
    \end{align*}
    Overall, we obtain
    \begin{align*}
        \begin{split}
        W_n-\E[W_n]=&\sum_{i<j} (j-i+2)\left(\frac{n^{3/2}}{m}\left(I_i^{(n),\ast}+I_j^{(n),\ast}\right)+nI_i^{(n),\ast}I_j^{(n),\ast}+\frac{n}{m^2}\right)\\
        &+\sum_{i,j=1}^m (\vert j-i\vert+1)\sqrt{n}I_i^{(n),\ast}\\
        &+\sum_{i=1}^m \left(n\left(I_i^{(n),\ast}\right)^2-\sqrt{n}I_i^{(n),\ast}\right)-\frac{n(m-1)}{m}.
        \end{split}
    \end{align*}
    Let $m \ge 3$. The first term in the sum is of order $n^{3/2}$ while all other terms are of smaller order. Therefore, with $W_n^\ast:=\frac{W_n-\E[W_n]}{n^{3/2}}$, since $I^{(n),\ast}$ converges in distribution towards $N$ given in \eqref{N_i}, we get
    \begin{align}\label{W_n_ast}
    \begin{split}
        W_n^\ast &=\frac{1}{m}\sum_{i<j} (j-i+2)\left(I_i^{(n),\ast}+I_j^{(n),\ast}\right)+o_\Prob(1)\\
        &\overset{d}{\longrightarrow} \frac{1}{m}\sum_{i<j} (j-i)(N_i+N_j)
        \end{split}
    \end{align}
    using the Continuous Mapping Theorem (Lemma \ref{Continuous Mapping Theorem}) and Slutsky's Theorem (Lemma \ref{Slutsky}) as well as $\sum_{i<y} I_i^{(n),\ast}+I_j^{(n),\ast}=(m-1)\sum_{i=1}^m I_i^{(n),\ast}=0$. The limit on the right-hand side of \eqref{W_n_ast} is of the form $\frac{1}{m}\sum_{k=1}^m a_k N_k$ with
    \begin{align}\label{a_k}
    \begin{split}
       a_k&=\sum_{i=1}^{k-1} {k-i}+\sum_{i=k+1}^m {i-k}\\
       &=\frac{k(k-1)}{2}+\frac{(m-k)(m-k+1)}{2}.
    \end{split}
    \end{align}
Since the $N_i$ are normal distributed with mean $0$, the arising limit distribution is also a centered normal distribution. From \eqref{Covariance} we obtain $\Var(N_i)=(m-1)/m^2$ and $\Cov(N_i,N_j)=-1/m^2$ for $i \neq j$. This implies
\begin{align*}
\begin{split}
    \Var\left(\frac{1}{2m}\sum_{k=1}^m c_k N_k\right)&=\frac{1}{4m^2}\left(\sum_{k=1}^m {c_k^2 \Var(N_k)}+\sum_{i \neq j} {c_i c_j \Cov(N_i,N_j)}\right)\\
    &=\frac{m-1}{4m^4}\sum_{k=1}^m {c_k^2}-\frac{1}{4m^4}\sum_{i \neq j} c_i c_j\\
    &=\frac{m-1}{4m^4}\sum_{k=1}^m {c_k^2} - \frac{1}{4m^4}\left(\sum_{k=1}^m c_k\right)^2+\frac{1}{4m^4}\sum_{k=1}^m c_k^2\\
    &=\frac{1}{4m^3} \sum_{k=1}^m c_k^2 - \frac{1}{4m^4}\left(\sum_{k=1}^m c_k\right)^2.
\end{split}
\end{align*}
Using $c_k=2a_k$ with $a_k$ given in \eqref{a_k}, we get $\sum_{k=1}^m c_k = \frac{2(m-1)m(m+1)}{3}$ and 
\begin{align*}
        \sum_{k=1}^m c_k^2&=\sum_{k=1}^m (k(k-1)+(m-k)(m-k+1))^2\\
        &=\frac{m(m+1)(m+2)(m^2+2m+2)}{15}.
\end{align*}
Putting these together, we obtain
\begin{align*}
    \begin{split}
        \Var&\left(\frac{1}{2m}\sum_{k=1}^m c_k N_k\right)\\
        &=\frac{m(m+1)(m+2)(m^2+2m+2)}{60m^3}- \frac{1}{4m^4}\left(\frac{2(m-1)m(m+1)}{3}\right)^2\\
        &=\frac{(m-2)(m-1)(m+1)(m+2)}{180m^2},
    \end{split}
\end{align*}
the assertion.\\
In the case $m=2$, the term of order $n^{3/2}$ vanishes because 
\begin{equation*}
    \sum_{i<j} (j-i+2)\left(I_i^{(n),\ast}+I_j^{(n),\ast}\right)=3\left(I_1^{(n),\ast}+I_2^{(n),\ast}\right)=0
\end{equation*}
using \eqref{sum:I_n}. Setting $W_n^\ast:=\frac{W_n-\E[W_n]}{n}$, the Continuous Mapping Theorem (Lemma \ref{Continuous Mapping Theorem}) together with Slutsky's Theorem (Lemma \ref{Slutsky}) gives us
\begin{align*}
W_n^{\ast}&=3\left(I_i^{(n),\ast}I_j^{(n),\ast}+\frac{1}{4}\right)+\left(I_i^{(n),\ast}\right)^2+\left(I_2^{(n),\ast}\right)^2-\frac{1}{2}+o_\Prob(1)\\
&\overset{d}{\longrightarrow}3N_1N_2+N_1^2+N_2^2+\frac{1}{4},
\end{align*}
the assertion for $m=2$.
\end{proof}
\noindent
The calculated variance of the normal distribution for $m\ge3$ rederives the leading term in the variance of $W_n$ (see \eqref{Var_W_n}).
\begin{proof}[Proof of Theorem \ref{clt:Gini}]
For a random caterpillar, the level index $L_n$ at time $n$ is given as 
\begin{align}\label{L_n}
    L_n=\sum_{i=0}^m \sum_{j=i+1}^m (j-i)S_i^{(n)} S_j^{(n)},
\end{align}
where $S_i^{(n)}$ is the number of nodes at depth $i$, so $S_0^{(n)}=1$, $S_i^{(n)}=I_i^{(n)}+1$ for $i=1,\dots,m-1$ and $S_m^{(n)}=I_m^{(n)}$. Substituting this into equation \eqref{L_n} and separating the term for $i = 0$ yields
\begin{align*}
    L_n=\sum_{j=1}^{m} jI_j^{(n)}+\sum_{j=1}^{m-1} j+\sum_{i=1}^{m}\sum_{j=i+1}^m (j-i)\left(I_i^{(n)}I_j^{(n)}+b_i I_j^{(n)}+b_j I_i^{(n)}+b_ib_j\right)
\end{align*}
with $b_i=1$ for $i=1,\dots,m-1$ and $b_m=0$. We get
\begin{align*}
   \lefteqn{ L_n-\E[L_n]}\\
   &=\sum_{j=1}^m j\left(I_j^{(n)}-\frac{n}{m}\right)
    +\sum_{i=1}^m \sum_{j=i+1}^m (j-i)\\
    &\qquad\times \bigg(I_i^{(n)}I_j^{(n)}-\E\left[I_i^{(n)}I_j^{(n)}\right]+b_i\left(I_j^{(n)}-\frac{n}{m}\right)+b_j\left(I_i^{(n)}-\frac{n}{m}\right)\bigg).
\end{align*}
Using again $I_i^{(n)}=\frac{n}{m}+\sqrt{n}I_i^{(n),\ast}$ we obtain similarly as in the proof of Theorem \ref{clt:Wiener} that 
\begin{align*}
    \lefteqn{L_n-\E[L_n]}\\
    &=\sum_{j=1}^m j\sqrt{n}I_j^{(n),\ast}\\
    &\quad\quad+\sum_{i=1}^m\sum_{j=i+1}^m (j-i) \Bigg(\frac{n^{3/2}}{m}\left(I_i^{(n),\ast}+I_j^{(n),\ast}\right)+nI_i^{(n),\ast}I_j^{(n),\ast}+\frac{n}{m^2}\Bigg)\\
&\quad\quad\quad\quad\quad\quad\quad\quad\quad\quad\quad+\sqrt{n}\left(b_iI_j^{(n),\ast}+b_jI_i^{(n),\ast}\right)\Bigg)
\end{align*}
Let $m \ge 3$ and set $L_n^\ast:=(L_n-\E[L_n]/n^{3/2}$. Since $I^{(n),\ast}$ converges in distribution towards a multivariate normal distributed random variable $N$, the Continuous Mapping Theorem (Lemma \ref{Continuous Mapping Theorem}) and Slutsky's Theorem (Lemma \ref{Slutsky}) imply
\begin{align*}
   L_n^\ast&=\frac{1}{m}\sum_{i=1}^m \sum_{j=i+1}^m (j-i)\left(I_i^{(n),\ast}+I_j^{(n),\ast}\right)+o_\Prob(1)\\
   &\overset{d}{\longrightarrow}\frac{1}{m}\sum_{i=1}^m \sum_{j=i+1}^m (j-i)(N_i+N_j).
\end{align*}
The right-hand side equals the limit in \eqref{W_n_ast} and we obtain the stated result for the level index.

If $m=2$, the term of order $n^{3/2}$ vanishes analogously to the Wiener index and we have, with $L_n^\ast:=(L_n-\E[L_n])/n$,
\begin{align*}
    L_n^\ast=I_1^{(n),\ast}I_2^{(n),\ast}+\frac{1}{4}+o_\Prob(1) \overset{d}{\longrightarrow} N_1N_2+\frac{1}{4}
\end{align*}
using the Continuous Mapping Theorem (Lemma \ref{Continuous Mapping Theorem}) and Slutsky's Theorem (Lemma \ref{Slutsky}).

The Gini index is given as 
\begin{align*}
    G_n&=\frac{L_n}{(n+m)^2(n/(n+m)+(m-1)/2)}\\
    &=\frac{2L_n}{(m+1)n^2+2m^2n+m^3-m^2}.
\end{align*}
For $m \ge 3$, setting $G_n^\ast:=\sqrt{n}(G_n-\E[G_n])$, we directly obtain
\begin{align*}
    G_n^\ast \overset{d}{\longrightarrow} \frac{2}{m(m+1)}\sum_{i=1}^m \sum_{j=i+1}^m (j-i)(N_i+N_j).
\end{align*}
So, the limit distribution is normal, with a mean of zero. The variance can be calculated directly from previous results, giving a value of 
$$\frac{4}{(m+1)^2}\frac{(m-2)(m-1)(m+1)(m+2)}{180m^2}=\frac{(m-2)(m-1)(m+2)}{45(m+1)m^2}.$$

For case $m=2$,  with $G_n^\ast:=n(G_n-\E[G_n])$, we have
\begin{align*}
    G_n^\ast \overset{d}{\longrightarrow} N_1N_2+\frac{1}{4},
\end{align*}
the assertion.
\end{proof}
\begin{proof}[Proof of Theorem \ref{clt:Zagreb}]
Using \eqref{Zagreb}, we obtain for the Zagreb index $Z_n$ of a random caterpillar at time $n$ that
    \begin{align*}
        Z_n=\sum_{i=1}^m \left(D_i^{(n)}\right)^2+n,
    \end{align*}
since the degree of each leaf is $1$ and the total number of leaves is $n$. 
The relation between $D_i^{(n)}$ and $I_i^{(n)}$ implies
\begin{align*}
\begin{split}
Z_n&=\left(I_1^{(n)}+1\right)^2+\left(I_m^{(n)}+1\right)^2+\sum_{i=2}^{m-1} \left(I_i^{(n)}+2\right)^2+n\\
&=\sum_{i=1}^m\left(I_i^{(n)}\right)^2+2\sum_{i=2}^{m-1} I_i^{(n)}+3n+4(m-2)+2.
\end{split}
\end{align*}
With $Z_n^\ast:=\frac{Z_n-\E[Z_n]}{n}$ we have
\begin{align}\label{Z_n_ast}
    Z_n^\ast=\frac{1}{n}\left(\sum_{i=1}^m {\left(I_i^{(n)}\right)^2-\E\left[\left(I_i^{(n)}\right)^2\right]}+2\sum_{i=2}^{m-1} {I_i^{(n)}-\E\left[I_i^{(n)}\right]} \right).
\end{align}
Once again substituting $I_i^{(n)}=n/m+\sqrt{n} I_i^{(n),\ast}$ gives us
\begin{align*}
\begin{split}
\left(I_i^{(n)}\right)^2-\E\left[\left(I_i^{(n)}\right)^2\right]&=\frac{n^2}{m^2}+2\frac{n^{3/2}}{m}I_i^{(n),\ast}+n\left(I_i^{(n),\ast}\right)^2-\frac{n^2}{m^2}-\frac{n(m-1)}{m^2}\\
&=2\frac{n^{3/2}}{m}I_i^{(n),\ast}+n\left(I_i^{(n),\ast}\right)^2-\frac{n(m-1)}{m^2}.
\end{split}
\end{align*}
Since $\sum_{i=1}^m I_i^{(n),\ast}=0$ and $I^{(n),\ast}$ converges in distribution towards a multivariate normal distributed random variable, we have with the Continuous Mapping Theorem (Lemma \ref{Continuous Mapping Theorem})
\begin{align*}
    \frac{1}{n}\sum_{i=1}^m {\left(I_i^{(n)}\right)^2-\E\left[\left(I_i^{(n)}\right)^2\right]}=\sum_{i=1}^m{\left(I_i^{(n),\ast}\right)^2}-\frac{m-1}{m} \overset{d}{\longrightarrow}\sum_{i=1}^m {N_i^2} - \frac{m-1}{m}.
\end{align*}
For the second term we have
\begin{align*}
    \frac{1}{n}\sum_{i=2}^{m-1} I_i^{(n)}-\E\left[I_i^{(n)}\right]=\frac{1}{n}\sum_{i=2}^{m-1}\sqrt{n}I_i^{(n),\ast} \overset{\Prob}{\longrightarrow} 0.
\end{align*}
With Slutsky's Theorem (Lemma \ref{Slutsky}) we finally obtain
\begin{align*}
    Z_n^\ast \overset{d}{\longrightarrow} \sum_{i=1}^m N_i^2 -\frac{m-1}{m},
\end{align*}
the assertion.
\end{proof}
\begin{proof}[Proof of Theorem \ref{clt:Randic}]
        Note that for a random caterpillar, the $\alpha$-Randić index is given as
    \begin{align*}
    \begin{split}
    R_n^{[\alpha]}&=\sum_{i=2}^m \left(D_{i-1}^{(n)}D_i^{(n)}\right)^\alpha+\sum_{i=1}^m I_i^{(n)}\left(D_i^{(n)}\right)^\alpha\\
    &=\sum_{i=2}^m \left(\left(I_{i-1}^{(n)}+c_{i-1}\right)\left(I_i^{(n)}+c_i\right)\right)^\alpha+\sum_{i=1}^m I_i^{(n)}\left(I_i^{(n)}+c_i\right)^\alpha\\
    &=:g(I_1^{(n)},\ldots,I_m^{(n)})
    \end{split}
\end{align*}
with $c_1=c_m=1$, $c_i=2$ for $i=2,\dots,m-1$ and the function $g:\R^m\to\R$ given by
\begin{equation*}
g\left(x_1,\dots,x_m\right)=\sum_{i=2}^m \left(\left(x_{i-1}+c_{i-1}\right)\left(x_i+c_i\right)\right)^\alpha+\sum_{i=1}^m x_i\left(x_i+c_i\right)^\alpha.
\end{equation*}
Using a Taylor expansion of $g$ around $\mu^{(n)}=(n/m,\dots,n/m)^t$, we obtain
\begin{align*}
 g\left(I^{(n)}\right)
    &=g\left(\mu^{(n)}\right)+\sum_{i=1}^m \frac{\partial g}{\partial x_i}(\mu^{(n)})\left(I_i^{(n)}-\mu_i^{(n)}\right)\\
    &\quad~+\frac{1}{2}\sum_{i=1}^m \sum_{j=1}^m \frac{\partial^2 g}{\partial x_i \partial x_j}(\mu^{(n)})\left(I_i^{(n)}-\mu_i^{(n)}\right)\left(I_j^{(n)}-\mu_j^{(n)}\right)\\
    &\quad~+\frac{1}{6}\sum_{i=1}^m \sum_{j=1}^m \sum_{k=1}^m \frac{\partial^3 g}{\partial x_i \partial x_j \partial x_k}\left(\tilde{I}^{(n)}\right)\left(I_i^{(n)}-\mu^{(n)}_i\right)\\&\quad\quad\quad\quad\quad\quad\quad\quad\quad\times\left(I_j^{(n)}-\mu^{(n)}_j\right)\left(I_k^{(n)}-\mu^{(n)}_k\right),
\end{align*}
where $\tilde{I}^{(n)}$ is component-wise between $\mu^{(n)}$ and $I^{(n)}$. 

To analyse the asymptotic behavior of $R_n^{[\alpha]}$, we study the asymptotics of the terms of the latter Taylor expansion, calling them the constant, linear, quadratic and third Taylor terms, respectively.  Defining $y_i:=x_i+c_i$ for $i\in\{1,\ldots,m\}$, we have
\begin{align*}
\begin{split}
    \frac{\partial g}{\partial x_1}(x)&=\alpha y_2(y_1y_2)^{\alpha-1}+y_1^\alpha+\alpha x_1 y_1^{\alpha-1},\\
    \frac{\partial g}{\partial x_m}(x)&=\alpha y_{m-1}\left(y_{m-1}y_m\right)^{\alpha-1}+y_m^\alpha+\alpha x_m y_m^{\alpha-1},\\
    \frac{\partial g}{\partial x_i}(x)&=\alpha y_{i-1} (y_{i-1}y_i)^{\alpha-1}+\alpha y_{i+1}(y_iy_{i+1})^{\alpha-1}+y_i^\alpha+\alpha x_i y_i^{\alpha-1}
    \end{split}
\end{align*}
for $i\in\{2,\ldots,m-1\}$. This implies
\begin{align*}
    \begin{split}
    \frac{\partial g}{\partial x_1}\big(\mu^{(n)}\big)&=\frac{\partial g}{\partial x_m}\big(\mu^{(n)}\big)\\
    &=\frac{\alpha}{m^{2\alpha-1}}n^{2\alpha-1}+\frac{(1+\alpha)}{m^{\alpha}}n^\alpha+\mathrm{O}\left(n^{2\alpha-2}\right)+\mathrm{O}\left(n^{\alpha-1}\right),\\
    \frac{\partial g}{\partial x_i}
    \big(\mu^{(n)}\big)&=\frac{2\alpha}{m^{2\alpha-1}}n^{2\alpha-1}+\frac{(1+\alpha)}{m^{\alpha}}n^\alpha+\mathrm{O}\left(n^{2\alpha-2}\right)+\mathrm{O}\left(n^{\alpha-1}\right)
    \end{split}
\end{align*}
and therefore, we obtain the linear Taylor term using $I_i^{(n)}-\mu_i^{(n)}=\sqrt{n}I_i^{(n),\ast}$ as
\begin{align*}
\lefteqn{\sum_{i=1}^m \frac{\partial g}{\partial x_i}\big(\mu^{(n)}\big)\left(I_i^{(n)}-\mu_i^{(n)}\right)}\\
&=\frac{\alpha}{m^{2\alpha-1}}n^{2\alpha-1/2}\left(\sum_{i=1}^m I_i^{(n),\ast}+\sum_{i=2}^{m-1} I_i^{(n),\ast}\right)\\
&\quad\quad~+\frac{(1+\alpha)}{m^{\alpha}}n^{\alpha+1/2}\sum_{i=1}^m I_i^{(n),\ast}+\mathrm{O}\left(n^{2\alpha-3/2}\right)+\mathrm{O}\left(n^{\alpha-1/2}\right).
\end{align*}
Since $\sum_{i=1}^m I_i^{(n),\ast}=0$, we obtain
\begin{align*}
\lefteqn{
    \sum_{i=1}^m \frac{\partial g}{\partial x_i}\big(\mu^{(n)}\big)\left(I_i^{(n)}-\mu_i^{(n)}\right)}\\
    &=\frac{\alpha}{m^{2\alpha-1}}n^{2\alpha-1/2}\sum_{i=2}^{m-1} I_i^{(n),\ast}+\mathrm{O}\left(n^{2\alpha-3/2}\right)+\mathrm{O}\left(n^{\alpha-1/2}\right).
\end{align*}
Note, that for $\alpha > 0$ we have $2\alpha-1/2>\alpha-1/2$; thus, the terms of order $n^{\alpha-1/2}$ are not dominant.\\
In a next step, we will study the quadratic Taylor term. Therefore, we have to calculate the second partial derivatives, which are given as
\begin{align*}
    \frac{\partial^2g}{\partial x_1^2}(x)&=\alpha(\alpha-1)y_2^2(y_1y_2)^{\alpha-2}+2\alpha y_1^{\alpha-1}+\alpha(\alpha-1) x_1 y_1^{\alpha-2},\\
    \frac{\partial^2 g}{\partial x_m^2}(x)&=\alpha(\alpha-1)y_{m-1}^2(y_{m-1}y_m)^{\alpha-2}+2\alpha y_m^{\alpha-1}+\alpha(\alpha-1)x_my_m^{\alpha-2},\\
 \frac{\partial^2 g}{\partial x_i^2}(x)&=\alpha(\alpha-1)\left(y_{i-1}^2(y_{i-1}y_i)^{\alpha-2}+y_{i+1}^2(y_i y_{i+1})^{\alpha-2}\right)\\
    &\quad\quad~+2\alpha y_i^{\alpha-1}+\alpha(\alpha-1)x_i y_i^{\alpha-2},\\
     \frac{\partial^2 g}{\partial x_{i} \partial x_{i-1}}(x)&=\frac{\partial^2 g}{\partial x_i \partial x_{i+1}}(x)=\alpha^2(y_{i-1}y_i)^{\alpha-1},
\end{align*}
for $i \in \{2,\dots,m-1\}$. All other second partial derivatives are zero. This implies, with $\gamma:=\max\{2\alpha-3,\alpha-2\}$, that
\begin{align*}
    \frac{\partial^2 g}{\partial x_1^2}\big(\mu^{(n)}\big)= \frac{\partial^2 g}{\partial x_m^2}\big(\mu^{(n)}\big)&=\frac{\alpha(\alpha-1)}{m^{2\alpha-2}}n^{2\alpha-2}+\frac{2\alpha+\alpha(\alpha-1)}{m^{\alpha-1}} n^{\alpha-1}+\mathrm{O}\left(n^\gamma\right),\\
     \frac{\partial^2 g}{\partial x_i^2}\big(\mu^{(n)}\big)&=\frac{2\alpha(\alpha-1)}{m^{2\alpha-2}}n^{2\alpha-2}+\frac{2\alpha+\alpha(\alpha-1)}{m^{\alpha-1}} n^{\alpha-1}+\mathrm{O}\left(n^\gamma\right),
\end{align*}
and
\begin{align*}
    \frac{\partial^2 g}{\partial x_{i} \partial x_{i-1}}\big(\mu^{(n)}\big)&= \frac{\partial^2 g}{\partial x_i \partial x_{i+1}}\big(\mu^{(n)}\big)=\frac{\alpha^2}{m^{2\alpha-2}}n^{2\alpha-2}+\mathrm{O}\left(n^{2\alpha-3}\right),
\end{align*}
for all $i\in\{2,\ldots,m-1\}$.
The quadratic Taylor term can now be computed as:
\begin{align*}
\lefteqn{
        \frac{1}{2}\sum_{i=1}^m \sum_{j=1}^m \frac{\partial^2 g}{\partial x_i \partial x_j}\big(\mu^{(n)}\big)\left(I_i^{(n)}-\mu_i^{(n)}\right)\left(I_j^{(n)}-\mu_j^{(n)}\right)}\\
        &=\frac{\alpha(\alpha-1)}{2m^{2\alpha-2}}n^{2\alpha-1}\left(\sum_{i=1}^m \left(I_i^{(n),\ast}\right)^2+\sum_{i=2}^{m-1} \left(I_i^{(n),\ast}\right)^2\right)\\
        &\quad\quad+\frac{2\alpha+\alpha(\alpha-1)}{2m^{\alpha-1}} n^{\alpha}\sum_{i=1}^m \left(I_i^{(n),\ast}\right)^2\\
        &\quad\quad+\frac{2\alpha^2}{m^{2\alpha-2}}n^{2\alpha-1}\sum_{i=2}^{m} I_{i-1}^{(n),\ast}I_i^{(n),\ast}+\mathrm{O}\left(n^{\gamma+1}\right).
\end{align*}
Note that $2\alpha-1<2\alpha-1/2$, so terms of order $n^{2\alpha-1}$ are dominated by the linear Taylor term for all $\alpha \in \R$. Chernoff's inequality, see \cite{ch52} or \cite[Theorem 1.1]{McD98}, implies that
\begin{align}\label{Chernoff}
    \begin{split}
 \Prob\left(\max_{i=1,\dots,m} \left\vert I_i^{(n)}-\frac{n}{m}\right\vert \ge n^{3/5} \right) &\le \sum_{i=1}^m \Prob\left(\left\vert I_i^{(n)}-\frac{n}{m} \right\vert \ge n^{3/5} \right)\\
        &=m\Prob\left(\left\vert I_1^{(n)}-\frac{n}{m}\right\vert \ge n^{3/5} \right)\\
        &\le 2m \exp\left(-2n^{1/5}\right).
    \end{split}
\end{align}
Hence, with high probability (w.h.p.), the components of the vector $I^{(n)}$ are of linear order. This implies that the third partial derivatives, $\frac{\partial^3g}{\partial x_i \partial x_j \partial x_k}(\tilde{I}^{(n)})$, are of order $n^\gamma$ or smaller w.h.p. Therefore, the third Taylor term is also at most of order $n^{\gamma+9/5}$ w.h.p.~More specifically, there exists a constant $C>0$ such that the absolute value of the third Taylor term is smaller than $Cn^{\gamma+9/5}$ with a probability of at least $1-2m\exp(-2n^{1/5})$. In all cases, it is contained within $o_\Prob(1)$.

To calculate the expectation of $R_n^{[\alpha]}$, we can now use the Taylor expansion of $g$ to obtain
\begin{align*}
\E\left[R_n^{[\alpha]}\right]=&g\left(\mu^{(n)}\right)\\
&+\frac{1}{2}\sum_{i=1}^m \sum_{j=1}^m \frac{\partial^2g}{\partial x_i \partial x_j}\left(\mu^{(n)}\right) \E\left[\left(I_i^{(n)}-\mu_i^{(n)}\right)\left(I_j^{(n)}-\mu_j^{(n)}\right)\right]\\
    &+\frac{1}{6}\sum_{i=1}^m \sum_{j=1}^m \sum_{k=1}^m \E\bigg[\frac{\partial^3g}{\partial x_i \partial x_j \partial x_k} \left(\tilde{I}^{(n)}\right)\left(I_i^{(n)}-\mu_i^{(n)}\right)\\
    &\quad\quad\quad\quad\quad\quad\quad\quad\quad\quad\quad \quad\quad\times\left(I_j^{(n)}-\mu_j^{(n)}\right)\left(I_k^{(n)}-\mu_k^{(n)}\right)\bigg],
\end{align*}
since $\E[I_i^{(n)}]=\mu_i^{(n)}$ for all $i=1,\dots,n$. Using Newton's generalized binomial Theorem we get
\begin{align*}
    g\left(\mu^{(n)}\right)&=\sum_{i=2}^{m} \left(\frac{n}{m}\right)^{2\alpha}+\sum_{i=1}^m \left(\left(\frac{n}{m}\right)^{\alpha+1}+\frac{\alpha c_i}{m^\alpha}n^\alpha+\mathrm{O}\left(n^{\alpha-1}\right)\right)+\mathrm{O}\left(n^{2\alpha-1}\right)\\
    &=\frac{m-1}{m^\alpha}n^{2\alpha}+\frac{1}{m^\alpha}n^{\alpha+1}+\frac{2\alpha(m-1)}{m^\alpha}n^\alpha+\mathrm{O}\left(n^{\alpha-1}\right)+\mathrm{O}\left(n^{2\alpha-1}\right).
\end{align*}
Since $I_i^{(n)}-\mu_i^{(n)}=\sqrt{n}I_i^{(n),\ast}$, we have
\begin{align*}
    \frac{1}{2}\sum_{i=1}^m \sum_{j=1}^m &\frac{\partial^2g}{\partial x_i \partial x_j}\left(\mu^{(n)}\right) \E\left[\left(I_i^{(n)}-\mu_i^{(n)}\right)\left(I_j^{(n)}-\mu_i^{(n)}\right)\right]\\
    &=\frac{2\alpha+\alpha(\alpha-1)}{2m^{\alpha-1}} n^{\alpha}\sum_{i=1}^m \E\left[\left(I_i^{(n),\ast}\right)^2\right]+\mathrm{O}\left(n^{2\alpha-1}\right)+\mathrm{O}\left(n^{\alpha-1}\right)\\
    &=\frac{2\alpha+\alpha(\alpha-1)}{2m^{\alpha-1}} n^{\alpha}\sum_{i=1}^m \frac{m-1}{m^2}+\mathrm{O}\left(n^{2\alpha-1}\right)+\mathrm{O}\left(n^{\alpha-1}\right)\\
    &=(m-1)\frac{2\alpha+\alpha(\alpha-1)}{2m^{\alpha}} n^{\alpha}+\mathrm{O}\left(n^{2\alpha-1}\right)+\mathrm{O}\left(n^{\alpha-1}\right).
\end{align*}
To bound the expectation of the third Taylor term, we observe that $(I_i^{(n)}-\mu_i^{(n)})(I_j^{(n)}-\mu_j^{(n)})(I_k^{(n)}-\mu_k^{(n)})$ is at most of order $n^3$ and the third partial derivatives of $g$ evaluated at $\tilde{I}^{(n)}$ are at most constant if $\gamma<0$ and of order $n^\gamma$ if $\gamma>0$. Using \eqref{Chernoff}, we obtain 
\begin{eqnarray*}
\lefteqn{
    \Bigg\vert\E\Bigg[\frac{\partial^3g}{\partial x_i \partial x_j \partial x_k} \left(\tilde{I}^{(n)}\right)\left(I_i^{(n)}-\mu_i^{(n)}\right)\left(I_j^{(n)}-\mu_j^{(n)}\right)\left(I_k^{(n)}-\mu_k^{(n)}\right)\Bigg]\Bigg\vert}\\
    &&~ \le C_1 n^{\gamma+1.8}+2C_2mn^{\eta+3}\exp\left(-2n^{0.2}\right)=\mathrm{O}\left(n^{\gamma+1.8}\right)
\end{eqnarray*}
with constants $C_1,C_2>0$ and $\eta:=\max\{\gamma,0\}$. Hence, this term converges to zero in all cases.\\ 

Ad (i): If $\alpha>\frac{1}{2}$, we have $2\alpha-\frac{1}{2}>\alpha$; hence, the linear Taylor term is dominant. Let 
$$R_n^{[\alpha],\ast}:=\frac{R_n^{[\alpha]}-\E[R_n^{[\alpha]}]}{n^{2\alpha-1/2}}.$$
Now the terms of order $n^{2\alpha}$ and $n^{\alpha+1}$ in $g\left(\mu^{(n)}\right)$ and $\E[R_n^{[\alpha]}]$ cancel, and we obtain
\begin{align*}
    R_n^{[\alpha],\ast}=\frac{\alpha}{m^{2\alpha-1}}\sum_{i=2}^{m-1}I_i^{(n),\ast}+o_\Prob(1)\overset{d}{\longrightarrow} \frac{\alpha}{m^{2\alpha-1}}\sum_{i=2}^{m-1} N_i
\end{align*}
using the Continuous Mapping Theorem (Lemma \ref{Continuous Mapping Theorem}) and Slutsky's Theorem (Lemma \ref{Slutsky}). Furthermore, we have $\sum_{i=2}^{m-1} N_i = N_1+N_m$ almost surely, so the latter limit is normal with mean $0$, and we obtain, using Lemma \ref{mix_mom}, the variance
\begin{align*}
    \begin{split}
        \Var\left(\frac{\alpha}{m^{2\alpha-1}}\left(N_1+N_m\right)\right)&=\frac{\alpha^2}{m^{4\alpha-2}}\left(2\Var(N_1)+2\Cov(N_1,N_m)\right)\\
        &=\frac{2\alpha^2}{m^{4\alpha-2}}\left(\frac{m-1}{m^2}-\frac{1}{m^2}\right)\\
        &=\frac{2\alpha^2}{m^{4\alpha}}\left(m-2\right).
    \end{split}
\end{align*}

Ad (ii): If $\alpha<\frac{1}{2}$, the term of order $n^\alpha$ in the quadratic Taylor term is dominant, and with 
$$R_n^{[\alpha],\ast}:=\frac{R_n^{[\alpha]}-\E[R_n^{[\alpha]}]}{n^\alpha}$$
we obtain
\begin{align*}
    R_n^{[\alpha],\ast}&=\frac{2\alpha+\alpha(\alpha-1)}{2m^{\alpha-1}}\sum_{i=1}^m \left(\left(I_i^{(n),\ast}\right)^2-\E\left[\left(I_i^{(n),\ast}\right)^2\right]\right)+o_\Prob(1)\\
    &\overset{d}{\longrightarrow} \frac{2\alpha+\alpha(\alpha-1)}{2m^{\alpha-1}}\left(\sum_{i=1}^m N_i^2-\frac{m-1}{m}\right)
\end{align*}
using once again the Continuous Mapping Theorem (Lemma \ref{Continuous Mapping Theorem}) and Slutsky's Theorem (Lemma \ref{Slutsky}) as well as $\E[(I_i^{(n),\ast})^2]=(m-1)/m^2$ for all $i=1,\dots,m$ as given in \eqref{Covariance}.\\

Ad (iii): If $\alpha=\frac{1}{2}$, then $2\alpha-\frac{1}{2}=\alpha$ and the linear and quadratic Taylor terms are both of order $\sqrt{n}$. Therefore, the asymptotic distribution of $R_n^{[1/2],\ast}$ is the sum of the two cases: when $\frac{1}{2} < \alpha < 1$ and when $0 < \alpha < \frac{1}{2}$.

If $m = 2$, the linear Taylor term also vanishes when $\frac{1}{2} \le \alpha$, since in this case, we have $c_1 = c_2 = 1$ and $I_1^{(n),\ast} + I_2^{(n),\ast} = 0$. Consequently, the quadratic Taylor term is dominant for all values of $\alpha$, which yields the aforementioned result for $m=2$.
\end{proof}

\bibliographystyle{amsplain}

\bibliography{ref}
\pagebreak

\appendix
\section{Variance of $W_n$}\label{Var:W_n}
\noindent
In order to calculate the variance of  $W_n$, we must first estimate the variances of the individual terms and their covariances. To do this, we need the mixed moments of a multinomial distribution, which we take from \cite{ouimet2020}. To keep the text readable, the calculations will be split into several lemmata. Furthermore, we will use the notation $I_i$ instead of $I_i^{(n)}$.
\begin{lem}
    We have 
    \begin{align*}
    \begin{split}
    \Var\left(\sum_{i<j} (j-i+2)I_iI_j\right)=&\frac{n^3(m-1)(m-2)(m+1)(m+2)}{180m^2}
        \\
        &+\frac{n^2(m-1)(m^3+m^2+71m+251)}{90m^2}
        \\
        &-\frac{n(m-1)(m^3+m^2+46m+166)}{60m^2}.
        \end{split}
        \end{align*}
\end{lem}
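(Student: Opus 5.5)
We compute $\Var\big(\sum_{i<j}w_{ij}I_iI_j\big)$ with $w_{ij}:=j-i+2$ by expanding the square:
\begin{align*}
\Var\Big(\sum_{i<j} w_{ij}I_iI_j\Big)=\sum_{i<j}\sum_{k<l} w_{ij}w_{kl}\Big(\E[I_iI_jI_kI_l]-\E[I_iI_j]\E[I_kI_l]\Big).
\end{align*}
By Lemma \ref{mix_mom}, $\E[I_iI_j]=n(n-1)/m^2$ for $i\neq j$. The fourth mixed moments are most easily handled with factorial moments of the multinomial distribution (as in \cite{ouimet2020}). For distinct indices, $\E[\prod_r I_{i_r}^{\underline{a_r}}]=n^{\underline{\sum a_r}}m^{-\sum a_r}$, where $x^{\underline{a}}$ denotes the falling factorial. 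This leaves three cases according to $|\{i,j\}\cap\{k,l\}|$:
\begin{enumerate}
\item[(a)] All four indices distinct: $\E=n^{\underline 4}/m^4$.
\item[(b)] Exactly one common index, say $I_i^2I_jI_l$: using $I_i^2=I_i^{\underline 2}+I_i$ we get $\E=n^{\underline 4}/m^4+n^{\underline 3}/m^3$.
\item[(c)] $\{i,j\}=\{k,l\}$: $\E[I_i^2I_j^2]=n^{\underline4}/m^4+2n^{\underline3}/m^3+n^{\underline2}/m^2$.
\end{enumerate}
Subtracting $(n^{\underline2})^2/m^4$ in each case, we obtain covariance kernels $\kappa_a,\kappa_b,\kappa_c$ that are explicit polynomials in $n$ (of degrees $2$, $3$, $3$ after cancellation). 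Indeed, $n^{\underline4}-(n^{\underline2})^2=-n(n-1)(4n-6)$, so $\kappa_a=-n(n-1)(4n-6)/m^4$.

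The variance then equals
\[
\kappa_a A+\kappa_b B+\kappa_c C,
\]
where $A,B,C$ are the sums of $w_{ij}w_{kl}$ over ordered pairs of pairs in the respective overlap classes. We compute these through the simpler sums
\[
S_1:=\sum_{i<j}w_{ij},\qquad S_2:=\sum_{i<j}w_{ij}^2,\qquad T:=\sum_{k=1}^m\Big(\sum_{j\neq k}w_{kj}\Big)^2,
\]
with $w$ extended symmetrically. Then
\[
C=S_2,\qquad B=T-2S_2,\qquad A=S_1^2-B-C.
\]
Each of these is an elementary polynomial sum in $m$. For example, $\sum_{j\ne k}w_{kj}=a_k+2(m-1)$ with $a_k$ from \eqref{a_k}, so $T$ reduces to power sums $\sum k^p$ for $p\le 4$, which is the same flavour as the $\sum c_k^2$ computation in the proof of Theorem \ref{clt:Wiener}. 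Similarly, $S_1=\sum_{d=1}^{m-1}(m-d)(d+2)$ and $S_2=\sum_{d}(m-d)(d+2)^2$.

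As a check on the leading coefficient, only the $n^3$ parts of $\kappa_a,\kappa_b,\kappa_c$ contribute at that order. These are $-4/m^4$, $(-4+m)/m^4$ and $(-4+2m)/m^4$, so the $n^3$ coefficient is
\[
\bigl(m(B+2C)-4S_1^2\bigr)/m^4 .
\]
This must agree with $(m-1)(m-2)(m+1)(m+2)/(180m^2)$, consistent with the CLT computation already carried out for $W_n^\ast$; the identity $m(B+2C)=mT$ reproduces exactly the $\frac{1}{4m^3}\sum c_k^2-\frac{1}{4m^4}(\sum c_k)^2$ structure there. This confirms the bookkeeping.

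The main obstacle is the algebra of the lower-order terms. Collecting the $n^2$ and $n$ coefficients requires expanding the $\kappa$'s fully and simplifying rational functions in $m$ of degree up to about $6$, where sign and counting errors (ordered versus unordered pairs, the factor $2$ for the two orientations of the shared index in case (b)) are easy to make. I would do this symbolically, and verify the final formula against direct exact computation for small cases, e.g.\ $m=2,3$ and $n=1,2,3$, where the variance can be enumerated from the multinomial law by hand.
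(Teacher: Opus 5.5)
Your setup is correct, and it is the same strategy as the paper's. Both proofs expand over ordered pairs of index pairs, split according to $|\{i,j\}\cap\{k,l\}|\in\{0,1,2\}$, and insert the multinomial factorial moments. The difference is bookkeeping. The paper forms $\E[S^2]$ (with $S:=\sum_{i<j}w_{ij}I_iI_j$) and subtracts $\E[S]^2$ only at the end. It evaluates the one-overlap configurations by separate triple sums, silently matching the unlisted configurations to listed ones by symmetry. Your per-class kernels and the row-sum identity $B=T-2S_2$, with $\sum_{j\ne k}w_{kj}=a_k+2(m-1)$, handle all one-overlap configurations at once. In fact the paper's combined second moment is exactly
\[
\E[S^2]=\frac{n^{\underline 4}}{m^4}S_1^2+\frac{n^{\underline 3}}{m^3}T+\frac{n^{\underline 2}}{m^2}S_2 ,
\]
so its $n^{\underline 3}$ coefficient is your $T$.

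As written, however, you stop before the part that actually constitutes the lemma, namely the $n^2$ and $n$ coefficients. The obstacle you anticipate largely disappears in your own notation. All three kernels carry the factor $n(n-1)/m^4$, and $A+B+C=S_1^2$, $B+2C=T$, so
\[
\Var(S)=\frac{n(n-1)}{m^4}\Bigl[(6-4n)S_1^2+m(n-2)T+m^2S_2\Bigr].
\]
Here $S_1=\frac{m(m-1)(m+7)}{6}$ and $S_2=\frac{m(m-1)(m^2+9m+32)}{12}$. Using $\sum_k a_k=\frac{(m-1)m(m+1)}{3}$ and $\sum_k a_k^2=\frac{m(m^2-1)(7m^2-8)}{60}$, you get $T=\frac{m(m-1)(7m^3+87m^2+232m-328)}{60}$. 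Collecting terms gives
\[
\Var(S)=\frac{n(n-1)(m-1)}{m^2}\left(\frac{(m+1)(m^2-4)}{180}\,n+\frac{m^3+m^2+46m+166}{60}\right),
\]
which expands to the stated formula. For $m=3$ it is $n(n-1)(2n+102)/81$, which matches direct enumeration at $n=2,3$.

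Two small points. First, $\kappa_a=-n(n-1)(4n-6)/m^4$ has degree $3$, not $2$; your own leading-order check uses its $n^3$ coefficient $-4/m^4$. Second, if you borrow from the $\sum c_k^2$ computation in the proof of Theorem~\ref{clt:Wiener}, the closed form printed there is a misprint; the correct value is $\sum_k c_k^2=4\sum_k a_k^2$ with $\sum_k a_k^2$ as above. The final limiting variance stated there is nevertheless right.
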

\begin{proof}
First of all, the sum can be rewritten as
\begin{align*}
    \sum_{i<j} (j-i+2)I_iI_j=\sum_{i=1}^{m-1}\sum_{j=1}^{m-i}(j+2)I_iI_{i+j}.
\end{align*}
Then we have
\begin{align*}
    \begin{split}
        \E\left[\sum_{i=1}^{m-1}\sum_{j=i+1}^{m}(j-i+2)I_iI_j\right]&=\sum_{i=1}^{m-1}\sum_{j=1}^{m-i}(j+2)\E\left[I_iI_{i+j}\right]\\
    &= \E\left[I_1I_2\right]\sum_{i=1}^{m-1}\frac{(m-i)(m-i+5)}{2}\\
    &= \frac{n(n-1)}{m^2}\sum_{i=1}^{m-1}\frac{i(i+5)}{2}
    \\
    &= \frac{n(n-1)(m-1)(m+7)}{6m}
    \end{split}
\end{align*}
and
\begin{align*}
    \begin{split}
        \E&\left[\left(\sum_{i=1}^{m-1}\sum_{j=i+1}^{m}(j-i+2)I_iI_j\right)\left(\sum_{k=1}^{m-1}\sum_{l=k+1}^{m}(l-k+2)I_kI_l\right)\right] 
        \\
        &= \sum_{i=1}^{m-1}\sum_{j=i+1}^m\sum_{k=1}^{m-1}\sum_{l=k+1}^m(j-i+2)(l-k+2)\E\left[I_iI_jI_kI_l\right]
        \\
        &=\sum_{i=1}^{m-1}\sum_{j=i+1}^m\sum_{k=1,k\neq i,j}^{m-1}\sum_{l=k+1,l\neq i,j}^m(j-i+2)(l-k+2)\E\left[I_iI_jI_kI_l\right]
        \\
        &+\sum_{i=1}^{m-1}\sum_{j=i+1}^m\sum_{l=i+1,l\neq j}^m(j-i+2)(l-i+2)\E\left[I_i^2I_jI_l\right]
        \\
        &+ \sum_{i=1}^{m-1}\sum_{j=i+1}^m\sum_{l=j+1}^m(j-i+2)(l-j+2)\E\left[I_iI_j^2I_l\right]
        \\
        &+ \sum_{i=1}^{m-1}\sum_{j=i+1}^m(j-i+2)^2\E\left[I_i^2I_j^2\right]
        \\
        &+ \sum_{i=1}^{m-1}\sum_{j=i+1}^m\sum_{k=1}^{i-1}(j-i+2)(i-k+2)\E\left[I_i^2I_jI_k\right]
        \\
        &+\sum_{i=1}^{m-1}\sum_{j=i+1}^m\sum_{k=1}^{j-1}(j-i+2)(j-k+2)\E\left[I_iI_j^2I_k\right].
    \end{split}
    \end{align*}
    The calculations have led to terms where the expectations and the sums can be treated separately, since $I_1,\dots, I_m^{(n)}$ are identically distributed. Calculating the sum terms, we get
    \begin{align*}
        &\sum_{i=1}^{m-1}\sum_{j=i+1}^m\sum_{l=i+1,l\neq j}^m(j-i+2)(l-i+2)= \frac{m(m-1)(m-2)(3m^2+34m+111)}{60},\\
        &\sum_{i=1}^{m-1}\sum_{j=i+1}^{m}\sum_{l=j+1}^{m}(j-i+2)(l-j+2)=\frac{m(m-1)(m-2)(m+6)(m+17)}{120},\\
        &\sum_{i=1}^{m-1}\sum_{j=i+1}^{m}(j-i+2)^2 = \frac{m(m-1)(m^2+9m+32)}{12}\\
    \end{align*}
    and
    \begin{align*}
    \begin{split}
        \sum_{i=1}^{m-1}\sum_{j=i+1}^m\sum_{k=1,k\neq i,j}^{m-1}&\sum_{l=k+1,l\neq i,j}^m(j-i+2)(l-k+2)\\
        &=\frac{m(m-1)(m-2)(m-3)(5m^2+69m+244)}{180}.
        \end{split}
    \end{align*}
For the mixed moments of the multinomially distributed random variable $I^{(n)}$, we have
\begin{align}
    \begin{split}
    &\E\left[I_1 I_2 I_3 I_4\right]=\frac{n(n-1)(n-2)(n-3)}{m^4},\\
    &\E\left[\left(I_1\right)^2 I_2 I_3\right]=\frac{n(n-1)(n-2)(n-3)}{m^4}+\frac{n(n-1)(n-2)}{m^3},\\
    &\E\left[\left(I_1\right)^2 \left(I_2\right)^2\right]=\frac{n(n-1)}{m^2}\left(\frac{(n-2)(n-3)}{m^2}+\frac{2(n-2)}{m}+1\right).
    \end{split}
\end{align}
Putting this together, we obtain
\begin{align*}
    \begin{split}
        \E&\left[\left(\sum_{i=1}^{m-1}\sum_{j=i+1}^{m}(j-i+2)I_iI_j\right)\left(\sum_{k=1}^{m-1}\sum_{l=k+1}^{m}(l-k+2)I_kI_l\right)\right]\\
        &= \frac{n(n-1)(n-2)(n-3)}{m^2}  \frac{(m-1)^2(m+7)^2}{36}
        \\
        &+ \frac{n(n-1)(n-2)}{m^2}  \frac{(m-1)(7m^3+87m^2+232m-328)}{60}
        \\
        &+ \frac{n(n-1)}{m}  \frac{(m-1)(m^2+9m+32)}{12}.
    \end{split}
\end{align*}
The variance can now be computed by
\begin{align*}
\begin{split}
    \Var&\left(\sum_{i<j} (j-i+2)I_iI_j\right)\\
    &=\E\left[\left(\sum_{i=1}^{m-1}\sum_{j=i+1}^{m}(j-i+2)I_iI_j\right)\left(\sum_{k=1}^{m-1}\sum_{l=k+1}^{m}(l-k+2)I_kI_l\right)\right]\\
    &\quad-\E\left[\sum_{i=1}^{m-1}\sum_{j=i+1}^{m}(j-i+2)I_iI_j\right]^2\\
    &=\frac{n^3(m-1)(m-2)(m+1)(m+2)}{180m^2}+\frac{n^2(m-1)(m^3+m^2+71m+251)}{90m^2}\\
    &\quad-\frac{n(m-1)(m^3+m^2+46m+166)}{60m^2}.
    \end{split}
\end{align*}
\end{proof}
\noindent
The second term is relatively easy compared to the others since there are no products of $I_i$ to be estimated.
\begin{lem}
We have
\begin{align}
\Var\left(\sum_{i,j=1}^m \left(\vert j-i \vert +1\right)I_j\right)=\frac{n(m-1)(m-2)(m+1)(m+2)}{180}.
\end{align}
\end{lem}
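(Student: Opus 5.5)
The plan is to reduce the double sum to a single linear form in the multinomial vector and then reuse the power sums already computed in the proof of Theorem \ref{clt:Wiener}. First I exchange the order of summation:
\begin{align*}
\sum_{i,j=1}^m (\vert j-i\vert+1)I_j=\sum_{j=1}^m w_j I_j,\qquad w_j:=\sum_{i=1}^m(\vert j-i\vert+1)=a_j+m,
\end{align*}
where $a_j$ is exactly the coefficient from \eqref{a_k}. Since $\sum_j I_j=n$ is deterministic, adding $m$ to every weight only shifts the sum by the constant $mn$. Hence the variance equals $\Var\bigl(\sum_{k=1}^m a_k I_k\bigr)$.

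Next I use the first and second moments of $M(n;1/m,\dots,1/m)$, namely $\Var(I_k)=n(m-1)/m^2$ and $\Cov(I_k,I_l)=-n/m^2$ for $k\neq l$ (as in the proof of Lemma \ref{mix_mom}, or equivalently $n$ times \eqref{Covariance}). Expanding the variance exactly as was done for $\Var\bigl(\frac{1}{2m}\sum_k c_kN_k\bigr)$ in the proof of Theorem \ref{clt:Wiener} gives
\begin{align*}
\Var\Bigl(\sum_{k=1}^m a_kI_k\Bigr)=n\left(\frac1m\sum_{k=1}^m a_k^2-\frac1{m^2}\Bigl(\sum_{k=1}^m a_k\Bigr)^2\right).
\end{align*}

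Finally I substitute $c_k=2a_k$ together with the two sums already computed in that proof: $\sum_k c_k=\frac{2(m-1)m(m+1)}{3}$ and $\sum_k c_k^2=\frac{m(m+1)(m+2)(m^2+2m+2)}{15}$. The bracket then becomes $m^2\bigl(\frac{1}{4m^3}\sum_k c_k^2-\frac{1}{4m^4}(\sum_k c_k)^2\bigr)$. This is $m^2$ times the quantity shown there to equal $\frac{(m-2)(m-1)(m+1)(m+2)}{180m^2}$. Multiplying by $n$ yields the claimed $\frac{n(m-1)(m-2)(m+1)(m+2)}{180}$.

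There is no real obstacle here. The only points needing care are checking that $w_j-a_j$ is the same constant $m$ for every $j$, which justifies dropping it, and tracking the factor $m^2$ between the two normalisations.
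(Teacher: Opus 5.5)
Your argument is correct in structure, but it follows a different route from the paper. The paper works with the full weights $w_j=j^2-j(m+1)+\frac{m(m+3)}{2}$ directly. It computes $\E\bigl[\sum_j w_jI_j\bigr]=\frac{n(m^2+3m-1)}{3}$, obtains the second moment from $\E[I_iI_j]=\frac{n(n-1)}{m^2}$ and $\E[I_i^2]=\frac{n(n-1)}{m^2}+\frac nm$, and subtracts.

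You first discard the constant part $m$ of each weight using $\sum_j I_j=n$. You then apply the covariance matrix $n\Sigma$ to $\sum_k a_kI_k$, which gives exactly $nm^2\Var\bigl(\frac1m\sum_k a_kN_k\bigr)$. This is shorter, and it explains why the lemma's answer is $nm^2$ times the limiting variance in the Wiener CLT for $m\ge 3$: up to an irrelevant shift, the linear term carries the same weights $a_k$ that drive the $n^{3/2}$ fluctuations. Both routes still need the same power sum ($\sum_j w_j^2$ in the paper, $\sum_k a_k^2$ for you).

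There is one problem with the value you import. The identity $\sum_{k=1}^m c_k^2=\frac{m(m+1)(m+2)(m^2+2m+2)}{15}$ printed in the paper is wrong. For $m=3$ you have $c=(6,4,6)$ and $\sum_k c_k^2=88$, while that formula gives $68$. Substituting it into your bracket would give $\frac{17}{3}-\frac{64}{9}=-\frac{13}{9}$, a negative variance.

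Expanding $c_k=2k^2-2(m+1)k+m(m+1)$ with the standard power sums gives the correct value
\[
\sum_{k=1}^m c_k^2=\frac{(m-1)m(m+1)(7m^2-8)}{15}.
\]
Your bracket then equals
\[
\frac{(m^2-1)(7m^2-8)}{60}-\frac{(m^2-1)^2}{9}=\frac{(m^2-1)(m^2-4)}{180},
\]
which yields the claimed $\frac{n(m-1)(m-2)(m+1)(m+2)}{180}$.

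The final value $\frac{(m-2)(m-1)(m+1)(m+2)}{180m^2}$ asserted in the paper is correct, so your conclusion stands. As written, though, your last step leans on a misprinted intermediate identity rather than an actual computation. Replace it with the correct sum, or check it against the paper's second-moment formula, which implicitly contains $\frac1m\sum_j w_j^2=\frac{7m^4+40m^3+45m^2-40m+8}{60}$.
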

\begin{proof}
    The calculation of the expectation yields us
    \begin{align*}
    \begin{split}
        \E\left[\sum_{i,j=1}^m \left(\vert j-i \vert +1\right)I_j\right]&=\sum_{j=1}^m\left(j^2-j(m+1)+\frac{m(m+3)}{2}\right)\E\left[I_j\right]\\
        &= \frac{n(m^2+3m-1)}{3}.
        \end{split}
    \end{align*}
Furthermore, we get for the second moment
\begin{align*}
\begin{split}
      \E&\left[\left(\sum_{i,j=1}^m \left(\vert j-i \vert +1\right)I_j\right)^2\right]\\
      &=\sum_{i=1}^m\sum_{j=1}^m\left(i^2-i(m+1)+\frac{m(m+3)}{2}\right)\left(j^2-j(m+1)+\frac{m(m+3)}{2}\right)\E\left[I_iI_j\right]\\
      &= \sum_{i=1}^m\sum_{j=1,j \neq i}^m\left(i^2-i(m+1)+\frac{m(m+3)}{2}\right)\left(j^2-j(m+1)+\frac{m(m+3)}{2}\right)\E\left[I_iI_j\right]
        \\
        &\quad +\sum_{j=1}^m\left(j^2-j(m+1)+\frac{m(m+3)}{2}\right)^2\E\left[I_i^2\right]\\
        &=\frac{n(n-1)(m^2+3m-1)^2}{9} + \frac{n(7m^4+40m^3+45m^2-40m+8)}{60}
      \end{split}
\end{align*}
using $\E[I_1I_2]=n(n-1)/m^2$ and $\E[(I_1)^2]=n(n-1)/m^2+n/m$. Totally, we obtain
\begin{align*}
    Var\left(\sum_{i,j=1}^m \left(\vert j-i \vert +1\right)I_j\right)=\frac{n(m-1)(m-2)(m+1)(m+2)}{180}.
\end{align*}
\end{proof}
\begin{lem}
    We have
    \begin{align}
    \Var\left(\sum_{i=1}^m I_i\left(I_i-1\right)\right)=\frac{2n(n-1)(m-1)}{m^2}.
    \end{align}
\end{lem}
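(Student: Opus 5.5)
The plan is to compute the variance directly from moments of the multinomial vector, via factorial moments. Write $X:=\sum_{i=1}^m I_i(I_i-1)$. By Lemma \ref{mix_mom}, $\E[X]=m\cdot n(n-1)/m^2=n(n-1)/m$. So it remains to compute
\begin{align*}
\E[X^2]=\sum_{i=1}^m \E\left[\left(I_i(I_i-1)\right)^2\right]+\sum_{i\neq j}\E\left[I_i(I_i-1)I_j(I_j-1)\right].
\end{align*}

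For the off-diagonal terms, use the standard multinomial factorial moment
\begin{align*}
\E\left[I_i^{\underline{a}}I_j^{\underline{b}}\right]=n^{\underline{a+b}}\,m^{-(a+b)},
\end{align*}
where $x^{\underline{k}}$ denotes the falling factorial, taken from \cite{ouimet2020}. This gives $\E[I_i(I_i-1)I_j(I_j-1)]=n(n-1)(n-2)(n-3)/m^4$.

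For the diagonal terms, expand $(x(x-1))^2=x^{\underline 4}+4x^{\underline 3}+2x^{\underline 2}$. This identity can be checked at $x=2$ (both sides equal $4$) and at $x=3$ (both sides equal $6+24+12=42$, and $36=36$ after correcting to $x^{\underline4}=0$: $0+24+12=36$). With $I_i\sim$ binomial$(n,1/m)$, this yields
\begin{align*}
\E\left[\left(I_i(I_i-1)\right)^2\right]=\frac{n^{\underline 4}}{m^4}+\frac{4n^{\underline 3}}{m^3}+\frac{2n^{\underline 2}}{m^2}.
\end{align*}

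Summing the two contributions gives
\begin{align*}
\E[X^2]=\frac{m^2 n^{\underline 4}}{m^4}+\frac{4n^{\underline 3}}{m^2}+\frac{2n^{\underline 2}}{m},
\end{align*}
using $m+m(m-1)=m^2$ for the coefficient of $n^{\underline 4}$. Subtracting $\E[X]^2=n^2(n-1)^2/m^2$ and factoring out $n(n-1)/m^2$, the variance equals $\frac{n(n-1)}{m^2}$ times
\begin{align*}
(n-2)(n-3)+4(n-2)+2m-n(n-1)=2m-2.
\end{align*}
Hence $\Var(X)=2n(n-1)(m-1)/m^2$, which is the claim.

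The only step needing care is the diagonal fourth-order factorial decomposition and the bookkeeping of the multinomial mixed factorial moments. Everything else is routine algebra, and the final cancellation to the constant $2m-2$ serves as a built-in check.
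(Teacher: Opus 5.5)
Your proof is correct and follows the paper's route. Like the paper, you split $\E[X^2]$ into $m$ diagonal and $m(m-1)$ off-diagonal terms, evaluate them with the multinomial mixed moments, and subtract $\E[X]^2=n^2(n-1)^2/m^2$, which leaves the cancellation $(n-2)(n-3)+4(n-2)-n(n-1)+2m=2m-2$. The only difference is cosmetic: you work with falling factorials directly, and your identity $(x(x-1))^2=x^{\underline 4}+4x^{\underline 3}+2x^{\underline 2}$ is right despite the garbled check at $x=3$, whereas the paper expands into raw moments $\E[I_1^4]$, $\E[I_1^3]$, $\E[I_1^2I_2^2]$ and arrives at the same values.
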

\begin{proof}
    The expectation gives us
    \begin{align*}
        \E\left[\sum_{i=1}^m I_i\left(I_i-1\right)\right] & 
        = m\left(\E\left[I_1^2\right]-\E\left[I_1\right]\right)\\
        &= m\bigg(\frac{n(n-1)}{m^2}+\frac{n}{m}-\frac{n}{m}\bigg) = \frac{n(n-1)}{m}.
    \end{align*}
    For the second moment we obtain
    \begin{align*}
        \E\left[\left(\sum_{i=1}^m I_i\left(I_i-1\right)\right)^2\right]&=m(m-1)\E\left[\left(I_1^2-I_2\right)\left(I_2^2-I_2\right)\right]\\
        &\quad+m\E\left[\left(I_1^2-I_1\right)^2\right]
    \end{align*}
    Using the mixed moments of a multinomial distribution, we get
    \begin{align*}
        \E\left[\left(I_1^2-I_1\right)\left(I_2^2-I_2\right)\right] &= \E\left[I_1^2I_2^2\right]-2\E\left[I_1^2I_2\right]+\E\left[I_1I_2\right]
        \\
        &= \left(\frac{n(n-1)(n-2)(n-3)}{m^4}+\frac{2n(n-1)(n-2)}{m^3}+\frac{n(n-1)}{m^2}\right)
        \\
        &\quad-2\left(\frac{n(n-1)(n-2)}{m^3}+\frac{n(n-1)}{m^2}\right)+\frac{n(n-1)}{m^2}
        \\
        &=\frac{n(n-1)(n-2)(n-3)}{m^4}
        \\
    \end{align*}
    and
    \begin{align*}
        \E\left[\left(I_1^2-I_1\right)\right] &= \E\left[I_1^4\right]-2\E\left[I_1^3\right]+\E\left[I_1^2\right]
        \\
        &= \left(\frac{n(n-1)(n-2)(n-3)}{m^4}+\frac{6n(n-1)(n-2)}{m^3}+\frac{7n(n-1)}{m^2}+\frac{n}{m}\right)
        \\
        &\quad- 2\bigg(\frac{n(n-1)(n-2)}{m^3}+\frac{3n(n-1)}{m^2}+\frac{n}{m}\bigg) + \bigg(\frac{n(n-1)}{m^2}+\frac{n}{m}\bigg)
        \\
        &= \frac{n(n-1)(n-2)(n-3)}{m^4}+\frac{4n(n-1)(n-2)}{m^3}+\frac{2n(n-1)}{m^2}.
    \end{align*}
    This gives us 
    \begin{align*}
        \E\left[\left(\sum_{i=1}^m I_i\left(I_i-1\right)\right)^2\right]=&\frac{n(n-1)(n-2)(n-3)+4n(n-1)(n-2)}{m^2}\\
        &+\frac{2n(n-1)}{m}.
    \end{align*}
    Putting the first and second moment together we obtain
    \begin{align*}
        \Var&\left(\sum_{i=1}^m I_i\left(I_i-1\right)\right)=\E\left[\left(\sum_{i=1}^m I_i\left(I_i-1\right)\right)^2\right]-\E\left[\sum_{i=1}^m I_i\left(I_i-1\right)\right]^2\\
        &=\frac{n(n-1)\left((n-2)(n-3)-n(n-1)\right)+4n(n-1)(n-2)}{m^2} +\frac{2n(n-1)}{m}
        \\
        &= -\frac{2n(n-1)}{m^2}+\frac{2n(n-1)}{m} = \frac{2n(n-1)(m-1)}{m^2}
    \end{align*}
\end{proof}
\noindent
In a second step, we have to calculate the covariances between the three terms to obtain the variance of the sum.
\begin{lem}
    We have
    \begin{align*}
        \Cov\bigg(\sum_{i<j} (j-i+2)I_iI_j,&\sum_{k,\ell=1}^m \left(\vert \ell-k \vert +1\right)I_\ell\bigg)\\
        &=\frac{n(n-1)(m-1)(m-2)(m+1)(m+2)}{180m}.
    \end{align*}
\end{lem}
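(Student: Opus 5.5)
The plan is to use bilinearity of the covariance and reduce everything to the mixed moments of order at most three of the multinomial vector $I=(I_1,\dots,I_m)$. First I rewrite the second argument as a linear form. Summing over $k$, we get $\sum_{k,\ell}(\vert \ell-k\vert+1)I_\ell=\sum_{\ell=1}^m w_\ell I_\ell$ with
\begin{align*}
w_\ell=\sum_{k=1}^m(\vert \ell-k\vert+1)=a_\ell+m,
\end{align*}
where $a_\ell$ is as in \eqref{a_k}. This agrees with the quadratic $\ell^2-\ell(m+1)+m(m+3)/2$ already used in the preceding lemma. Hence
\begin{align*}
\Cov\Big(\sum_{i<j}(j-i+2)I_iI_j,\sum_\ell w_\ell I_\ell\Big)=\sum_{i<j}\sum_{\ell}(j-i+2)\,w_\ell\,\Cov(I_iI_j,I_\ell).
\end{align*}

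Next I compute $\Cov(I_iI_j,I_\ell)$ for $i\neq j$, using the multinomial moments (as in \cite{ouimet2020}):
\begin{align*}
\E[I_iI_jI_\ell]=\frac{n(n-1)(n-2)}{m^3}\ \text{ for distinct } i,j,\ell,\qquad \E[I_i^2I_j]=\frac{n(n-1)(n-2)}{m^3}+\frac{n(n-1)}{m^2}.
\end{align*}
Together with Lemma \ref{mix_mom} and $\E[I_\ell]=n/m$, this gives
\begin{align*}
\Cov(I_iI_j,I_\ell)=\frac{n(n-1)}{m^2}\Big(\mathbf 1_{\{\ell\in\{i,j\}\}}-\frac{2}{m}\Big).
\end{align*}
Plugging this in, the covariance equals $\frac{n(n-1)}{m^2}\sum_\ell (b_\ell-\tfrac{2S}{m})w_\ell$. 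Here $b_\ell=\sum_{j\neq \ell}(\vert j-\ell\vert+2)=a_\ell+2(m-1)$ and $S=\sum_{i<j}(j-i+2)$, so that $\sum_\ell b_\ell=2S$ and $2S/m=\bar b$ is the average of the $b_\ell$.

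The key structural observation is that $\sum_\ell(b_\ell-\bar b)w_\ell$ is unchanged by adding constants to $b$ or to $w$. Since $b_\ell$ and $w_\ell$ both differ from $a_\ell$ only by constants, the covariance reduces to
\begin{align*}
\frac{n(n-1)}{m^2}\Big(\sum_{k=1}^m a_k^2-\frac1m\Big(\sum_{k=1}^m a_k\Big)^2\Big).
\end{align*}
Both sums were already computed in the proof of Theorem \ref{clt:Wiener} via $c_k=2a_k$:
\begin{align*}
\sum_{k=1}^m a_k=\frac{(m-1)m(m+1)}{3},\qquad \sum_{k=1}^m a_k^2=\frac{m(m+1)(m+2)(m^2+2m+2)}{60}.
\end{align*}
The same combination appeared there, since $\frac{1}{m^3}\bigl(\sum a_k^2-\frac1m(\sum a_k)^2\bigr)=\frac{(m-2)(m-1)(m+1)(m+2)}{180m^2}$. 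Therefore the bracket equals $\frac{m(m-1)(m-2)(m+1)(m+2)}{180}$. Multiplying by $n(n-1)/m^2$ yields the claimed $\frac{n(n-1)(m-1)(m-2)(m+1)(m+2)}{180m}$.

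The main obstacle is bookkeeping rather than conceptual: correctly separating the cases $\ell\in\{i,j\}$ from $\ell\notin\{i,j\}$ in the third moments, and identifying $b_\ell$ and $w_\ell$ as shifts of $a_\ell$. Once the centering trick is seen, the remaining algebra is a single polynomial identity in $m$. I would verify that identity (and the sign of the $-2/m$ term) numerically for $m=2$, where the result must vanish, and for $m=3$.
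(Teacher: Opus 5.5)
Your argument is correct, and it takes a different route from the paper's. Write $A=\sum_{i<j}(j-i+2)I_iI_j$ and $B=\sum_\ell w_\ell I_\ell$.

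\emph{The paper} computes $\E[AB]$ directly. It splits the triple sum into the cases $k\notin\{i,j\}$, $k=i$ and $k=j$, evaluates explicit polynomial sums in $m$, inserts third-order multinomial moments, and subtracts $\E[A]\E[B]$. The factor $(m-1)(m-2)(m+1)(m+2)$ appears only after cancellation.

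\emph{You} instead use bilinearity with the closed form $\Cov(I_iI_j,I_\ell)=\frac{n(n-1)}{m^2}\bigl(\mathbf 1_{\{\ell\in\{i,j\}\}}-\frac2m\bigr)$. This is correct, including the sign, and consistent with $\sum_\ell\Cov(I_iI_j,I_\ell)=\Cov(I_iI_j,n)=0$. Your second ingredient is that $b_\ell$ and $w_\ell$ are constant shifts of $a_\ell$. Together these give the covariance as $\frac{n(n-1)}{m^2}\sum_k(a_k-\bar a)^2$.

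Your route avoids the long sums. It also explains why the constant in the limiting variance of $W_n$ reappears: the covariance is exactly $n(n-1)m$ times that variance. The paper's computation needs no structural observation, but it carries considerably more algebra.

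One concrete error needs repair. The value $\sum_k a_k^2=\frac{m(m+1)(m+2)(m^2+2m+2)}{60}$ is false; it comes from a misprint in the paper's display for $\sum_k c_k^2$.
\begin{itemize}
\item For $m=3$ you have $a=(3,2,3)$, so $\sum_k a_k^2=22$, whereas the formula gives $17$.
\item With $17$, your bracket would be $17-\frac{64}{3}<0$, which is impossible for a sum of squared deviations.
\item The correct value is $\sum_k a_k^2=\frac{m(m^2-1)(7m^2-8)}{60}$.
\end{itemize}

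Your conclusion survives, because you read the bracket off the true final identity $\frac{1}{m^3}\bigl(\sum a_k^2-\frac1m(\sum a_k)^2\bigr)=\frac{(m-2)(m-1)(m+1)(m+2)}{180m^2}$, not off this sum. As written, however, your two displays contradict each other. The paper's own displayed derivation of that identity also rests on the same misprint.

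It is cleaner to compute the bracket directly. With $x_k=k-\frac{m+1}{2}$ one has $a_k=x_k^2+\frac{m^2-1}{4}$, hence
\begin{align*}
\sum_{k=1}^m a_k^2-\frac1m\Big(\sum_{k=1}^m a_k\Big)^2
&=\sum_{k=1}^m x_k^4-\frac1m\Big(\sum_{k=1}^m x_k^2\Big)^2\\
&=\frac{m(m^2-1)(3m^2-7)}{240}-\frac{m(m^2-1)^2}{144}=\frac{m(m^2-1)(m^2-4)}{180}.
\end{align*}
Multiplying by $n(n-1)/m^2$ gives the claimed covariance.
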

\begin{proof}
    Since the expectations of the two terms are calculated beforehand, we only have to derive an expression for the expectation of the product. We have
    \begin{align*}
        \E&\left[\left(\sum_{i<j} (j-i+2)I_iI_j\right)\left(\sum_{k,\ell=1}^m \left(\vert \ell-k \vert +1\right)I_\ell \right)\right]\\
        &=\sum_{i=1}^{m-1}\sum_{j=i+1}^{m}(j-i+2)\sum_{k=1,k\neq i,j}^m\left(k^2-k(m+1)+\frac{m(m+3)}{2}\right)\E[I_iI_jI_k]
        \\
        &\quad+\sum_{i=1}^{m-1}\sum_{j=i+1}^{m}(j-i+2)\left(i^2-i(m+1)+\frac{m(m+3)}{2}\right)\E[I_i^2I_j]
        \\
        &\quad+\sum_{i=1}^{m-1}\sum_{j=i+1}^{m}(j-i+2)\left(j^2-j(m+1)+\frac{m(m+3)}{2}\right)\E[I_iI_j^2]
    \end{align*}
The mixed moments are once again used from \cite{ouimet2020}. Furthermore, we get
\begin{align*}
    \sum_{i=1}^{m-1}\sum_{j=i+1}^{m}(j-i+2)&\sum_{k=1,k\neq i,j}^m\left(k^2-k(m+1)+\frac{m(m+3)}{2}\right)\\
    &=\frac{m(m-1)(m-2)(10m^3+99m^2+197m-72)}{180}
\end{align*}
and
\begin{align*}
    \sum_{i=1}^{m-1}\sum_{j=i+1}^{m}(j-i+2)&\left(i^2-i(m+1)+\frac{m(m+3)}{2}\right)\\
    &=\frac{m(m-1)(7m^3+67m^2+132m-48)}{120}.
\end{align*}
This gives us
\begin{align*}
    \E\Bigg[\Bigg(\sum_{i<j} (j-i+2)&I_iI_j\Bigg)
    \left(\sum_{k,\ell=1}^m \left(\vert \ell-k \vert +1\right)I_\ell \right)\Bigg]\\
    &=\frac{(m-1)(m+7)(m^2+3m-1)n(n-1)(n-2)}{18m}\\
    &\quad+ \frac{(m-1)(7m^3+67m^2+132m-48)n(n-1)}{60m},
\end{align*}
from which we can directly obtain the covariance
\begin{align*}
    \Cov\bigg(\sum_{i<j} (j-i+2)&I_iI_j,\sum_{k,\ell=1}^m \left(\vert \ell-k \vert +1\right)I_\ell\bigg)\\
    &=\frac{n(n-1)(n-2)(m-1)(m+7)(m^2+3m-1)}{18m} 
        \\
        &\quad+ \frac{n(n-1)(m-1)(7m^3+67m^2+132m-48)}{60m}
        \\
        &\quad- \frac{n(n-1)(m-1)(m+7)}{6m}\frac{n(m^2+3m-1)}{3}
        \\
        &= \frac{n(n-1)(m-1)(m-2)(m+1)(m+2)}{180m}.
\end{align*}
\end{proof}
\begin{lem}
    We have
    \begin{align*}
        \Cov\left(\sum_{i<j} (j-i+2)I_iI_j,\sum_{k=1}^m I_k\left(I_k-1\right)\right)=-\frac{2n(n-1)(m-1)(m+7)}{6m^2}.
    \end{align*}
\end{lem}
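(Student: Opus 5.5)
The plan is to write the covariance as $\E[XY]-\E[X]\E[Y]$ with $X=\sum_{i<j}(j-i+2)I_iI_j$ and $Y=\sum_{k=1}^m I_k(I_k-1)$. We already know $\E[X]=S\,n(n-1)/m^2$, where
\[
S:=\sum_{i<j}(j-i+2)=\frac{m(m-1)(m+7)}{6},
\]
and $\E[Y]=n(n-1)/m$; both expectations were computed above. So the only work is $\E[XY]$. The key simplification I would use is that mixed moments of the multinomial are cleanest in falling factorials: for distinct indices,
\[
\E\big[I_{i_1}^{\underline{a_1}}\cdots I_{i_r}^{\underline{a_r}}\big]=\frac{n^{\underline{a_1+\dots+a_r}}}{m^{a_1+\dots+a_r}},
\]
where $x^{\underline{a}}$ is the falling factorial. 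Lemma~\ref{mix_mom} is the special case of order two. This avoids expanding everything into raw moments as in the previous appendix lemmata.

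Expanding $\E[XY]=\sum_{i<j}(j-i+2)\sum_{k=1}^m\E[I_iI_jI_k(I_k-1)]$, I would split the inner sum according to whether $k\notin\{i,j\}$, $k=i$ or $k=j$.
\begin{itemize}
\item For $k\notin\{i,j\}$, each term is a product of falling factorials at distinct indices, giving $n^{\underline 4}/m^4$. There are $m-2$ such $k$.
\item For $k=i$, write $I_i^2(I_i-1)=I_i^{\underline 3}+2I_i^{\underline 2}$. Then
\[
\E[I_i(I_i-1)I_iI_j]=\frac{n^{\underline 4}}{m^4}+\frac{2n^{\underline 3}}{m^3}.
\]
\item The case $k=j$ is symmetric and gives the same value.
\end{itemize}
Thus the inner sum is independent of $(i,j)$ and equals $m\,n^{\underline 4}/m^4+4n^{\underline 3}/m^3$. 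Hence
\[
\E[XY]=\frac{S}{m^3}\big(n^{\underline 4}+4n^{\underline 3}\big).
\]

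Subtracting $\E[X]\E[Y]=S\,n^2(n-1)^2/m^3$ gives
\[
\Cov(X,Y)=\frac{S\,n(n-1)}{m^3}\big((n-2)(n-3)+4(n-2)-n(n-1)\big).
\]
The bracket simplifies to $(n^2-n-2)-(n^2-n)=-2$. Therefore $\Cov(X,Y)=-2S\,n(n-1)/m^3$, and inserting $S$ yields $-\frac{2n(n-1)(m-1)(m+7)}{6m^2}$, as claimed.

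The only delicate step is the bookkeeping of the diagonal cases $k\in\{i,j\}$: converting $I_i^2(I_i-1)$ into falling factorials correctly, and making sure each unordered pair $\{i,j\}$ contributes exactly two diagonal terms. I would double-check this step against the case $m=2$ by direct computation with $I_2=n-I_1$.
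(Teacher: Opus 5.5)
Your proof is correct and matches the paper's argument: the same split of $\E[XY]$ into the cases $k\notin\{i,j\}$, $k=i$ and $k=j$, the same moment values $n^{\underline 4}/m^4$ and $n^{\underline 4}/m^4+2n^{\underline 3}/m^3$, and the same final bracket simplifying to $-2$. The only cosmetic difference is that you read these moments off the falling-factorial formula directly, whereas the paper expands them into the raw mixed moments $\E[I_1^3I_2]$ and $\E[I_1^2I_2]$ taken from Ouimet's paper.
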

\begin{proof}
    We get
    \begin{align*}
        \begin{split}
            \E\Bigg[\sum_{i<j} (j-i+2)I_iI_j,&\sum_{k=1}^m I_k\left(I_k-1\right)\Bigg]\\
            &=\sum_{i=1}^{m-1}\sum_{j=i+1}^{m}\sum_{k=1}^m(j-i+2)\E\left[I_i I_j(I_k^2-I_k)\right]
        \\
        &= \sum_{i=1}^{m-1}\sum_{j=i+1}^{m}\sum_{k=1,k \neq i,j}^m(j-i+2)\E\left[I_iI_{j}(I_k^2-I_k)\right] 
        \\
        &\quad+\sum_{i=1}^{m-1}\sum_{j=i+1}^{m}(j-i+2)\E\left[I_iI_{j}(I_i^2-I_i)\right]\\
        &\quad+\sum_{i=1}^{m-1}\sum_{j=1}^{m-i}(j-i+2)\E\left[I_iI_{j}(I_{j}^2-I_{j})\right] 
        \end{split}
    \end{align*}
    For the sum terms we obtain
    \begin{align*}
        \begin{split}
            &\sum_{i=1}^{m-1}\sum_{j=1}^{m-i}{j-i+2}=\frac{m(m-1)(m+7)}{6},\\
            &\sum_{j=i+1}^{m}\sum_{k=1,k \neq i,j}^m {j-i+2}=(m-2)\sum_{i=1}^{m-1}\sum_{j=1}^{m-i}{j-i+2}=\frac{m(m-1)(m-2)(m+7)}{6}.
        \end{split}
    \end{align*}
    The expectations can be calculated by
    \begin{align*}
        \E\left[I_1I_2(I_3^2-I_3)\right]&=\E\left[I_1 I_2 I_3^2 \right]-\E[I_1 I_2 I_3]\\
        &=\left(\frac{n(n-1)(n-2)(n-3)}{m^4}+\frac{n(n-1)(n-2)}{m^3}\right) - \frac{n(n-1)(n-2)}{m^3}
        \\
        &= \frac{n(n-1)(n-2)(n-3)}{m^4}
        \\
        \E[I_1I_2(I_1^2-I_1)] &= \E\left[I_1^3I_2\right]-\E\left[I_1^2 I_2\right]
        \\
        &= \left(\frac{n(n-1)(n-2)(n-3)}{m^4}+\frac{3n(n-1)(n-2)}{m^3}+\frac{n(n-1)}{m^2}\right) 
        \\
        &\quad-\left(\frac{n(n-1)(n-2)}{m^3}+\frac{n(n-1)}{m^2}\right)
        \\
        &= \frac{n(n-1)(n-2)(n-3)}{m^4} +\frac{2n(n-1)(n-2)}{m^3}
    \end{align*}
    Putting all this together, we get
    \begin{align*}
        \E\Bigg[\sum_{i<j} (j-i+2)I_iI_j,&\sum_{k=1}^m I_k\left(I_k-1\right)\Bigg]\\
        &=\frac{n(n-1)(n-2)(n-3)}{m^4}\frac{m^2(m-1)(m+7)}{6}\\
        &\quad+\frac{2n(n-1)(n-2)}{m^3}\frac{2m(m-1)(m+7)}{6} 
        \\
        &=\frac{n(n-1)(n-2)(n-3)(m-1)(m+7)}{6m^2}\\
        &\quad+\frac{4n(n-1)(n-2)(m-1)(m+7)}{6m^2}\\
        &= \frac{n(n-1)(n-2)(n+1)(m-1)(m+7)}{6m^2}
    \end{align*}
    and therefore we obtain
    \begin{align*}
        \begin{split}
            \Cov\Bigg(\sum_{i<j} (j-i+2)I_iI_j,&\sum_{k=1}^m I_k\left(I_k-1\right)\Bigg)\\
            &=\E\left[\sum_{i<j} (j-i+2)I_iI_j,\sum_{k=1}^m I_k\left(I_k-1\right)\right]\\
            &\quad-\E\left[\sum_{i<j} (j-i+2)I_iI_j\right]\E\left[\sum_{k=1}^m I_k\left(I_k-1\right)\right]\\
            &=-\frac{2n(n-1)(m-1)(m+7)}{6m^2}
        \end{split}
    \end{align*}
\end{proof}
\begin{lem}
    We have
    \begin{align*}
        \Cov\left(\sum_{i,j=1}^m \left(\vert i-j \vert +1\right)I_j,\sum_{k=1}^m I_k(I_k-1)\right)=0.
    \end{align*}
\end{lem}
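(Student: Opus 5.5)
The plan is to reduce the covariance to a single one-index covariance by exploiting symmetry of the multinomial vector. Write $A:=\sum_{i,j=1}^m(\vert i-j\vert+1)I_j=\sum_{j=1}^m w_jI_j$ with weights
\[
w_j=\sum_{i=1}^m(\vert i-j\vert+1)=j^2-j(m+1)+\frac{m(m+3)}{2},
\]
as already computed in the proof of the variance of the second term. Write $B:=\sum_{k=1}^m I_k(I_k-1)$. By bilinearity,
\[
\Cov(A,B)=\sum_{j=1}^m w_j\,\Cov\Big(I_j,\sum_{k=1}^m I_k(I_k-1)\Big).
\]

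The key observation is that $B$ is a symmetric function of $(I_1,\dots,I_m)$, and the multinomial law $M(n;1/m,\dots,1/m)$ is exchangeable. Hence $c:=\Cov(I_j,B)$ does not depend on $j$, and
\[
\Cov(A,B)=c\sum_{j=1}^m w_j=\Cov\Big(\sum_{j=1}^m I_j,\,B\Big)\cdot\frac{\sum_j w_j}{m}.
\]
Since $\sum_{j=1}^m I_j=n$ is deterministic, its covariance with $B$ vanishes. So $c=0$ and therefore $\Cov(A,B)=0$.

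As a cross-check, I would also compute $c$ directly with the mixed moments used above. Using $\E[I_1(I_1^2-I_1)]$ and $\E[I_1(I_2^2-I_2)]=\E[I_1I_2^2]-\E[I_1I_2]$ together with $\E[B]=n(n-1)/m$, this should reproduce $c=0$.

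There is no real obstacle here. The only point requiring care is justifying that $\Cov(I_j,B)$ is independent of $j$. This follows because permuting coordinates leaves both the law of $I^{(n)}$ and $B$ invariant, so the joint law of $(I_j,B)$ does not depend on $j$.
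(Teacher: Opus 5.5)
Your proof is correct, but it is not the route the paper takes.

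The paper argues by direct computation. It splits $\E[AB]$ into the terms with $j\neq i$ and with $j=i$. It then inserts the multinomial moments $\E[I_1(I_1-1)I_2]=n(n-1)(n-2)/m^3$ and $\E[I_1^2(I_1-1)]=n(n-1)(n-2)/m^3+2n(n-1)/m^2$, together with $\sum_j w_j=m(m^2+3m-1)/3$. This gives $\E[AB]=n^2(n-1)(m^2+3m-1)/(3m)$, which it checks equals $\E[A]\E[B]$.

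Your exchangeability argument explains why that cancellation has to happen. $\Cov(I_j,B)$ is the same for every $j$, and these covariances sum to $\Cov(n,B)=0$. So every linear statistic $\sum_j w_jI_j$ is uncorrelated with every symmetric function of $I^{(n)}$. You need neither the explicit weights $w_j$ nor any third-order moments. The zero is structural, not a coincidence of the coefficients $\vert i-j\vert+1$.

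What the paper's computation buys is uniformity with the rest of the appendix, where the same table of mixed moments handles all the variance and covariance terms of $W_n$. It also produces $\E[AB]$ explicitly.

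Your symmetry idea would also shorten the preceding covariance lemma. Exchangeability makes $\Cov(I_iI_j,B)$ the same for all pairs $i<j$. Combined with $\sum_{i<j}I_iI_j=\frac12(n^2-n-B)$, this gives $\Cov(I_iI_j,B)=-\Var(B)/(m(m-1))$.

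Your cross-check, $\E[I_1^2(I_1-1)]+(m-1)\E[I_1I_2(I_2-1)]=n^2(n-1)/m^2=\E[I_1]\E[B]$, is exactly the paper's computation reduced to a single index.
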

\begin{proof}
    We get
    \begin{align*}
        \E\Bigg[&\left(\sum_{i,j=1}^m \left(\vert i-j \vert +1\right)I_j\right)\left(\sum_{k=1}^m I_k(I_k-1)\right)\Bigg]\\
        &=\sum_{i=1}^m\sum_{j=1,j\neq i}^m\left(j^2-j(m+1)+\frac{m(m+3)}{2}\right)\E[I_i(I_i-1)I_j] 
        \\
        &\quad+\sum_{i=1}^m\left(i^2-i(m+1)+\frac{m(m+3)}{2}\right)\E[I_i^2(I_i-1)]
        \\
        &= (m-1)\frac{m(m^2+3m-1)}{3}\E[I_i(I_i-1)I_j]\\
        &\quad+\frac{m(m^2+3m-1)}{3}\E[I_i^2(I_i-1)]
    \end{align*}
Using
\begin{align*}
    \E[I_1(I_1-1)I_2] &= \E\left[I_1^2I_2\right]-\E[I_1I_2]
        \\
        &= \left(\frac{n(n-1)(n-2)}{m^3}+\frac{n(n-1)}{m^2}\right) - \frac{n(n-1)}{m^2}\\
        &= \frac{n(n-1)(n-2)}{m^3}
\end{align*}
and
\begin{align*}
    \E\left[I_1^2(I_1-1)\right] &= \E\left[I_1^3\right]-\E\left[I_1^2\right]\\
        &= \bigg(\frac{n(n-1)(n-2)}{m^3}+\frac{3n(n-1)}{m^2}+\frac{n}{m}\bigg) -\bigg(\frac{n(n-1)}{m^2}+\frac{n}{m}\bigg)
        \\
        &= \frac{n(n-1)(n-2)}{m^3}+\frac{2n(n-1)}{m^2}
\end{align*}
we obtain
\begin{align*}
    \E\Bigg[&\left(\sum_{i,j=1}^m \left(\vert i-j \vert +1\right)I_j\right)\left(\sum_{k=1}^m I_k(I_k-1)\right)\Bigg]\\
    &=\frac{m(m-1)(m^2+3m-1)}{3}\frac{n(n-1)(n-2)}{m^3}\\
    &\quad+\frac{m(m^2+3m-1)}{3}\left(\frac{n(n-1)(n-2)}{m^3}+\frac{2n(n-1)}{m^2}\right)\\
    &= \frac{n^2(n-1)(m^2+3m-1)}{3m}.
\end{align*}
This gives us
\begin{align*}
    \Cov&\left(\sum_{i,j=1}^m \left(\vert i-j \vert +1\right)I_j,\sum_{k=1}^m I_k(I_k-1)\right)\\
    &=\E\left[\left(\sum_{i,j=1}^m \left(\vert i-j \vert +1\right)I_j\right)\left(\sum_{k=1}^m I_k(I_k-1)\right)\right]\\
    &\quad - \E\left[\sum_{i,j=1}^m \left(\vert i-j \vert +1\right)I_j\right]\E\left[\sum_{k=1}^m I_k(I_k-1)\right]\\
    &=\frac{n^2(n-1)(m^2+3m-1)}{3m} -\frac{n(n-1)}{m}\frac{n(m^2+3m-1)}{3} = 0
\end{align*}
\end{proof}
\noindent
Now we have all tools to calculate the variance of $W_n$.
\begin{proof}[Proof of Theorem \ref{clt:Wiener} Part 1]
The variance is derived by putting the calculations done in the lemmata before together. We have
\begin{align*}
    \Var\left(W_n\right)&= 
        \frac{n^3(m-1)(m-2)(m+1)(m+2)}{180m^2}
        +\frac{n^2(m-1)(m^3+m^2+71m+251)}{90m^2}\\
        &\quad-\frac{n(m-1)(m^3+m^2+46m+166)}{60m^2}+\frac{2n(n-1)(m-1)}{m^2}\\
        &\quad+\frac{n(m-1)(m-2)(m+1)(m+2)}{18}-\frac{4n(n-1)(m-1)(m+7)}{6m^2}\\
        &\quad+\frac{2n(n-1)(m-1)(m-2)(m+1)(m+2)}{180m}\\
        &=\frac{n^3(m-1)(m-2)(m+1)(m+2)}{180m^2}\\
        &\quad+\frac{n^2(m-1)(m+1)(m^3+m^2-4m+11)}{90m^2}\\
        &\quad+\frac{n(m-1)(m+1)(m^4-2m^3-7m^2+8m-18)}{180m^2}.
\end{align*}
\end{proof}
\begin{rem}
Using analogous methods, we can calculate the variance of $R_n^{[1]}$. However, since the variance is not required for the proof of the limit theorems for the $\alpha$-Randić index, we will not present all the computations.
\end{rem}
\end{document}